\providecommand{\U}[1]{\protect\rule{.1in}{.1in}}
\newtheorem{theorem}{Theorem}
\newtheorem{example}[theorem]{Example}
\newtheorem{lemma}[theorem]{Lemma}
\newtheorem{proposition}[theorem]{Proposition}
\newenvironment{proof}[1][Proof]{\noindent\textbf{#1.} }{\ \rule{0.5em}{0.5em}}
\begin{document}

\title{A Nakano carrier type-theorem for orthogonally additive polynomials in Riesz spaces}
\author{Elmiloud Chil and Khansa Weslati\\Elmiloud Chil\\Institut pr\'{e}paratoire aux \'{e}tudes d'ing\'{e}nieurs de Tunis \\2 Rue Jawaher lel Nahrou Montfleury 1008 TUNISIA\\University of Tunis\\Elmiloud.chil@ipeit.rnu.tn\\Khansa Oueslati\\L.A.T.A.O Faculty of Sciences of Tunis Elmanar\\Tunis Elmanar University \\Khansaoueslati@ymail.com}
\date{}
\maketitle

\begin{abstract}
The purpose of the present paper is to prove the Nakano theorem for
orthogonally additive polynomials in Riesz spaces

\end{abstract}

\noindent\textbf{2010 Mathematics Subject Classification }46G25; 47B65; 46G20; 06F25.

\noindent\textbf{Keywords. }Homogeneous orthogonally additive polynomials,
Riesz spaces, Nakano theorem.

\section{Introduction and Preliminaries}

We take it for granted that the reader is familiar with notions of Riesz
spaces (or vector lattices) and operators between them. For terminology,
notations and concepts that are not explained in this paper, one can refer to
the standard monographs [2,15,19].

A Riesz space $E$ is called \textit{Archimedean} if for each non zero $a\in E$
the set $\left\{  na,n=\pm1,\pm2,...\right\}  $ has no upper bound in $E$. In
order to avoid unnecessary repetition we will assume throughout the paper that
all Riesz spaces under consideration are Archimedean. Recall that a subset $I$
of a Riesz space is called solid whenever $\left\vert x\right\vert
\leq\left\vert y\right\vert $ and $y\in I$ imply $x\in I$. A solid vector
subspace is called an ideal and an order closed ideal is referred to as a
band. Thus an ideal $I$ is a band \ if and only if $x_{\alpha}\in I$ and
$0\leq x_{\alpha}\nearrow x$ imply $x\in I$. A vector sublattice $I$ of $E$ is
said to be order dense in $E$ whenever for every $0<x\in E$ there exists $y\in
I$ satisfying $0<y\leq x$ or, equivalently, there exists an upward directed
net $0\leq y_{\alpha}\in I$ such that $y_{\alpha}\nearrow x$. A Riesz space is
called \textit{Dedekind complete} whenever every nonempty subset that is
bounded from above has a supremum. Similarly, a Riesz space is said to be
$\sigma$\textit{-Dedekind complete} whenever every countable subset that is
bounded from above has a supremum. Let us say that a vector subspace $G$ of a
Riesz space $E$ is \textit{majorizing} $E$ whenever for each $x\in E$ there
exists some $y\in G$ with $x\leq y$.(or, equivalently whenever for each $x\in
E$ there exists some $y\in G$ with $y\leq x$)$.$ A Dedekind complete Riesz
space $L$ is said to be a \textit{Dedekind completion} of the Riesz space $E$
whenever $E$ is Riesz isomorphic to an order dense majorizing Riesz subspace
of $L$ (which we identify with $E$). It is a classical result that every
Archimedean Riesz space $E$ has a Dedekind completion, which we shall denote
by $E^{\delta}$. The closure, with respect to the relatively uniform topology
of the Riesz space $E$ in its Dedekind completion is a uniformly complete
vector lattice, denoted by $E^{ru}$ and referred to as the uniform completion
of $E$ (see [15]). Thus $E$ is an order dense majorizing Riesz subspace of
$E^{ru}$. The Riesz space $E$ is said to be a lattice ordered algebra
(briefly, an $l$-algebra) if there exists an associative multiplication in $E$
with the usual algebra properties such that $xy\in E^{+}$ for all $x,y\in
E^{+}$. The $l$-algebra $E$ is called $f$-algebra if $E$ has the property that
$x\wedge y=0$ in $E$ implies $zx\wedge y=xz\wedge y=0$ for all $z\in E^{+}$ (
for more details about $f$-algebras see [16]). A Dedekind complete Riesz space
is called universally complete whenever every set of pairwise disjoint
positive elements has a supremum. Every Riesz space $E$ has a universally
completion $E^{u}$, i.e., there exists a unique (up to Riesz homomorphism)
universally complete ( and therefore Dedekind complete) Riesz space $E^{u}$ so
that $E$ can be identified with an order dense Riesz subspace of $E^{u}$.
Moreover, $E^{u}$ is furnished with a multiplication, under which $E^{u}$ is
an $f$-algebra with unit element (see [2]).

Let $E$ be a Riesz space and let $F$ be a vector space. A map $P:E\rightarrow
F$ is called a \textit{homogeneous polynomial} of degree $n$ (or a
$n$-homogeneous polynomial) if $P(x)=\psi(x,..,x)$, where $\psi$ is a
$n$-multilinear map from $E^{n}$ into $F$. Each homogeneous polynomial has an
unique associated symmetric multilinear mapping. A homogeneous polynomial, of
degree $n,$ $P:E\rightarrow F$ is said to be \textit{orthogonally additive} if
$P(x+y)=P(x)+P(y)$ where $x,y\in E$ are orthogonally (i.e. $\left\vert
x\right\vert \wedge\left\vert y\right\vert =0$) or equivalent that its
symmetric multilinear mapping $\psi$ being orthosymmetric, that is,
$\psi(x_{1},..,x_{n})=0$ whenever $x_{1},..,x_{n}\in E$ satisfy $x_{i}\perp
x_{j}$ for some $i\neq j$. We denote by $\mathcal{P}_{0}(^{n}E,F)$ the set of
$n$-homogeneous orthogonally additive polynomials from $E$ to $F$.

The study of orthogonally additive polynomials is of interest both from the
algebraic point of view and also from the point of view of infinite
dimensional analysis, in particular the theory of holomorphic functions on
infinite dimensional analysis. To the best of our knowledge the first
mathematician interested in orthogonally additive polynomials was Sundaresan
[20] who obtained a representation theorem for polynomials on $\ell^{p}$ and
on $L^{p}$. It is only recently that the class of such mappings has been
getting more attention. We are thinking here about works on orthogonally
additive polynomials and holomorphic functions and orthosymmetric multilinear
mappings on different Banach lattices (see [4,6,7,9,11,14,18,20]), on $%
\mathbb{C}
^{\ast}$-algebras (see [1,17]), on uniformly complete vector lattices
(see[3,8,21]), on uniformly complete vector lattices with range space is
separated convex bornological spaces (see [12]), also on uniformly complete
vector lattices taking values in Hausdorff topological vector spaces (see [3])
and on Riesz spaces taking values in Hausdorff topological vector spaces (see
[10]). Notice that the motivation for the study of this class of maps came
from one of the relevant problems in operator theory which is to describe
homogeneous orthogonally additive polynomial via linear operators. This is
called the representation theorem in Riesz spaces and which, with their
applications, will give a particular interest to the space $\mathcal{P}%
_{o}(^{n}E,F)$ in functional analysis. It finds its motivations in many
mathematical problems, coming from industrial applications, economic modeling,
abstract mathematical questions. In several situations, it is convenient to
substitute the study of this asymptotic with the study of the so-called
"representation theorem problem". Naturally, this representation theorem
problem must capture the main behavior of the Riesz topologies of
$\mathcal{P}_{o}(^{n}E,F)$ and a family of problems being dealt with, and its
solutions need to be more easily obtained. This problem can be treated in a
different manner, depending on domains and co-domains on which polynomials
act. Next, we give a short historical account for this problem. In 1990,
Sundaresan in [20] proved that every $n$-homogeneous orthogonally additive
polynomials $P:L^{p}\rightarrow%
\mathbb{R}
$ is determined by some $g\in L^{\frac{p}{p-n}}$ via the formula $P(f)=%
{\displaystyle\int}
f^{n}gd_{\mu}$ for all $f\in L^{p}$ and $n\leq p$. Thus $\mathcal{P}_{o}%
(^{n}L^{p},%
\mathbb{R}
)$ is lattice isomorphie to $L^{\frac{p}{p-n}}$. In the case of the Reisz
space $l_{1}$, $P$ is an order bounded orthogonally additive $n$-homogeneous
polynomial if and only if there exists a bounded sequence of real numbers
$(a_{j})$, such that
\[
P((x_{n}))=\overset{+\infty}{\underset{i=0}{\sum}}a_{j}x_{j}^{n}%
\]
for all $(x_{n})\in l_{1}$, which gives a canonical lattice isomorphism
between $\mathcal{P}_{ob}(^{n}l_{1},%
\mathbb{R}
)$ and $l_{\infty}$, the space of all bounded sequence of real numbers. The
next significant development was the discovery of an integral representation
for homogeneous orthogonally additive polynomials on $C(X)$ spaces by D.
Prez-Garcia and I.Villanueva in [18], D.Carando, S.Lassalle and I. Zalduendo
in [9], who proved a representation of the form $P(f)=%
{\displaystyle\int}
f^{n}d_{\mu}$ for all $f\in C(X)$ where $\mu$ is a regular Borel signed
measure on $X$. In [4] a result analogous to that of K. Sundareson for the
classes of Banach lattices of functions of order continuous K\"{o}the function
space, whose dual is given by integrals, have proven by Y. Benyamini, S.
Lassalle and LIavona. Proofs of the aforementioned results are strongly based
on the representation of Riesz spaces as vector spaces of extended continuous
functions. So they are not applicable to general Riesz spaces. Therefore, it
seems to be an interesting question what about representation theorem in
general case of Riesz spaces. The study of homogeneous orthogonally additive
polynomials has been greatly developed these last years see [3,6,10,21]. In
[10] authors showed that this kind of behavior is avoided by studying the
multilinear operators on Riesz spaces. In spite of the results mentioned
above, they are looking for a natural and explicit representation of a
homogeneous orthogonally additive polynomial on Riesz space. Such space would
play the role of the $n$-concavification of Banach lattices used in the
representation theorem for homogeneous orthogonally additive polynomials on
such spaces. More precisely they prove that if $E$ is an Archimedean Riesz
space, $F$ is a Hausdorff topological vector space and $P\in\mathcal{P}%
_{o}(^{n}E,F)$ then there exists a unique linear operator $T_{p}%
:\underset{i=1}{\overset{n}{\Pi}}E^{ru}\rightarrow F$ (which will be called
the associated operator of $P$) such that
\[
P(x)=T_{P}(x^{n})
\]
for all $x\in E$, where $E^{ru}$ is the uniform completion of $E$and the
product $\underset{i=!}{\overset{n}{\Pi}}E^{ru}=\left\{  \underset
{i=1}{\overset{n}{\pi}}x_{i}:x_{i}\in E\right\}  $ is the Riesz subspace of
$E^{u}$, the universal completion of $E$. Here, the multiplication is the
$f$-algebra multiplication of $E^{u}$, see [5,10] for more details. Perhaps
the most striking work in this direction, due to the first author and Dorai in
[10] in which they give by using the last mentioned representation theorem for
$P\in\mathcal{P}_{o}(^{n}E,F)$ a complete description for the lattice
structure of $\mathcal{P}_{ob}(^{n}E,F)$, the space of order bounded
$n$-homogeneous orthogonally additive polynomials in the case when \ $F$ is a
Dedekind complete Riesz space. That is, if $P\in\mathcal{P}_{ob}(^{n}E,F)$
then $T_{P}$ is an order bounded operator. This result gave rise of the
concept of lattice structure of $\mathcal{P}_{ob}(^{n}E,F)$ via the formula:%
\[
\left\vert P\right\vert (x)=\left\vert T_{P}\right\vert (x^{n})\text{ for all
}x\in E^{+}.
\]
Which gives a canonical lattice isomorphism between $\mathcal{P}_{ob}(^{n}E,%
\mathbb{R}
)$ and $\mathcal{L}_{b}(\underset{i=!}{\overset{n}{\Pi}}E^{ru},F)$. This
representation theorem without exception will be of great importance to us
here. As an application and by using the order continuity of the $f$-algebra
product it is easy to see that $P\in\mathcal{P}_{ob}(^{n}E,F)$ is order
continuous if and only if its associated operator $T_{P}$ is order continuous.
Which implies that for orthogonally additive polynomials order continuity at
the origin implies order continuity at every point. Let us note that this
property is not true in general case of homogeneous polynomials ( for more
details see [6]). We denote by $\mathcal{P}_{oc}(^{n}E,F)$ the set of order
continuous $n$-homogeneous orthogonally additive polynomials from $E$ to $F$.
Obviously $\mathcal{P}_{oc}(^{n}E,F)\subset\mathcal{P}_{ob}(^{n}E,F).$

In this paper, we focus on the carrier theorem for order continuous
homogeneous orthogonally additive polynomials in Riesz spaces that generalizes
the classical Nakano Carrier theorem for linear functionals (see [2]). Some
generalization to this concept and its applications was obtained in by C.
Boyed, A. Raymond \ and N. Snigireva in [6] in which they give the statement
of the Nakano Carrier theorem for order continuous homogeneous orthogonally
additive polynomials in Banach lattices. Proofs of the aforementioned results
are strongly based on the representation of these spaces as vector spaces of
extended continuous functions by using localization techniques. So they are
not applicable to general Riesz spaces. The present paper is largely motivated
by this work. In this sense, we intend to make some contributions to this area
by proving a Riesz space version of the Nakano Carrier Theorem for order
continuous homogeneous orthogonally additive polynomials. Actually, Banach
lattices will be replaced by Riesz spaces. Note also that the current paper
emphasizes the algebraic aspects of the problem. That is why, we need to
develop new approaches. Actually, we provide not only new results but also new
techniques, which we think are useful additions to the literature.
Nonetheless, we are not aware of works fully dedicated to this kind of
problems in the framework of Riesz space. Thus, to our knowledge, the study
which we carry out in this paper is original.

\section{Main Results}

In order to avoid unnecessary repetition we will assume throughout the paper
that $E$ is an Archimedean Riesz space and $F$ is a Dedekind complete Riesz spaces.

Before we pass to the details of this part we first refresh the memory by
recalling the definitions of the null ideal and the carrier of an homogeneous
orthogonally additive polynomial. The null ideal of an arbitrary
$P\in\mathcal{P}_{ob}(^{n}E,F)$ is the set%
\[
N_{P}=\left\{  x\in E:\left\vert P\right\vert (\left\vert x\right\vert
)=0\right\}  =\left\{  x\in E:\left\vert T_{P}\right\vert (\left\vert
x^{n}\right\vert )=0\right\}
\]
and its carrier is the band
\[
C_{P}=N_{P}^{d}\text{.}%
\]
Thus the carrier is a natural well behaved object essentially with respect to
order continuity for $P\in\mathcal{P}_{ob}(^{n}E,F).$ Let us note that
$N(P)=N(\left\vert P\right\vert $ so we emphasize that we can assume that $P$
is positive.

We start this section by the following Lemma in which we prove that every
positive homogeneous orthogonally additive polynomial is increasing on $E^{+}$.

\begin{lemma}
Let $P\in(\mathcal{P}_{ob}(^{n}E,F))^{+}$ and $0\leq x\leq y$ then $0\leq
P(x)\leq P(y).$
\end{lemma}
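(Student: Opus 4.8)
The statement asserts monotonicity of a positive orthogonally additive polynomial on the positive cone. The natural route is to pass through the associated operator $T_P$ supplied by the representation theorem quoted in the introduction, namely $P(x)=T_P(x^n)$ for all $x\in E$, where $T_P$ is a linear operator on the $f$-algebra product $\prod_{i=1}^n E^{ru}\subseteq E^u$. Since $P\in(\mathcal P_{ob}(^nE,F))^+$, the discussion in the excerpt gives $|P|=P$ and hence $|P|(x)=|T_P|(x^n)=T_P(x^n)$ for $x\in E^+$; in particular $T_P$ is a positive operator on the relevant positive cone. So the problem reduces to the order-theoretic fact that in the $f$-algebra $E^u$ one has $0\le x\le y$ $\Rightarrow$ $0\le x^n\le y^n$, after which positivity and linearity of $T_P$ finish the argument.

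First I would record that $P(x)=T_P(x^n)$ and that, because $P\ge 0$, the operator $T_P$ is positive on the image cone $\{x^n: x\in E^+\}$ (indeed $T_P(x^n)=P(x)\ge 0$), which is all that is needed. Next, the heart of the matter: for $0\le x\le y$ in $E$, viewed inside the universal completion $E^u$ with its $f$-algebra multiplication, I claim $0\le x^n\le y^n$. One proves this by induction on $n$: the base case $n=1$ is trivial, and for the inductive step write $y^n-x^n=y^{n-1}(y-x)+(y^{n-1}-x^{n-1})x$; each summand is a product of two elements of $(E^u)^+$ (using $y-x\ge 0$, $y\ge 0$, $y^{n-1}-x^{n-1}\ge 0$ by induction, and $x\ge 0$), hence is positive since $E^u$ is an $l$-algebra, so $y^n\ge x^n\ge 0$. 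Finally, applying $T_P$ and using $P(x)=T_P(x^n)$, $P(y)=T_P(y^n)$ together with positivity of $T_P$ on these cone elements yields $0\le P(x)\le P(y)$.

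The only genuine subtlety — and the step I would flag as the main obstacle — is justifying that everything takes place coherently inside a space where the power $x^n$ makes sense and the $f$-algebra (or at least $l$-algebra) positivity of products is available. This is handled by the representation theorem already cited in the introduction, which realizes $x\mapsto x^n$ via the $f$-algebra structure of $E^u$ (equivalently, via $\prod_{i=1}^n E^{ru}$) and guarantees $T_P$ is well defined and linear there; one must be slightly careful that $x,y\in E$ sit as an order dense majorizing subspace of $E^{ru}\subseteq E^u$, so the inequalities $0\le x\le y$ transfer verbatim. Beyond that bookkeeping, the argument is the short induction above. An alternative, avoiding $E^u$ entirely, is to argue directly from orthogonal additivity: decompose and use the increasing property of the symmetric $n$-linear form on positive vectors, but this is messier, so I would present the $f$-algebra version as the clean proof.
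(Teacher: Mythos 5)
Your proof is correct and takes essentially the same route as the paper: both reduce the claim to $0\le x^{n}\le y^{n}$ in the $f$-algebra $E^{u}$ (you additionally supply the telescoping induction $y^{n}-x^{n}=y^{n-1}(y-x)+(y^{n-1}-x^{n-1})x$ that the paper asserts without proof) and then apply positivity of the associated operator $T_{P}$. The one caution is that the positivity actually needed is that of $T_{P}$ on the whole positive cone of $\prod_{i=1}^{n}E^{ru}$ --- which follows from the lattice isomorphism $P\mapsto T_{P}$ recalled in the introduction, not merely from the observation $T_{P}(x^{n})=P(x)\ge 0$ on the set of $n$-th powers --- since $y^{n}-x^{n}$ need not itself be an $n$-th power.
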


\begin{proof}
Let $P\in(\mathcal{P}_{ob}(^{n}E,F))^{+}$ and $T_{P}$ its associated operator.
If $0\leq x\leq y$ then $0\leq x^{n}\leq y^{n}$ from the positivity of $T_{P}$
we have $0\leq T_{P}(x^{n})=P(x)\leq T_{P}(y^{n})=P(y).$
\end{proof}

The main results of this paper is strongly based on the following Theorem.

\begin{theorem}
$N_{P}$ is an ideal of $E$ and its a band whenever $P$ is order continuous.
\end{theorem}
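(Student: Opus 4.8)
The plan is to prove $N_P$ is an ideal by showing it is a solid subspace, using the representation $N_P=\{x\in E:\lvert T_P\rvert(\lvert x^n\rvert)=0\}$ from the preceding discussion together with the fact (Lemma~1, applied to $\lvert P\rvert$) that positive orthogonally additive polynomials are monotone on $E^+$. The key algebraic fact I would invoke repeatedly is that in the $f$-algebra $E^u$ one has $\lvert x^n\rvert=\lvert x\rvert^n$ and, crucially, that $\lvert x\rvert\le\lvert y\rvert$ implies $\lvert x\rvert^n\le\lvert y\rvert^n$; this reduces the solidity of $N_P$ to the positivity of $\lvert T_P\rvert$. For closure under addition I would argue that if $x,y\in N_P$ then $\lvert x+y\rvert\le\lvert x\rvert+\lvert y\rvert$, so $\lvert x+y\rvert^n\le(\lvert x\rvert+\lvert y\rvert)^n$; expanding the right side in the commutative $f$-algebra $E^u$ gives a sum of monomials $\binom{n}{k}\lvert x\rvert^k\lvert y\rvert^{n-k}$, each of which is dominated (up to the binomial constant) by $\lvert x\rvert^n\vee\lvert y\rvert^n$ via the $f$-algebra inequality $ab\le a^2\vee b^2$ iterated, or more simply by noting $\lvert x\rvert^k\lvert y\rvert^{n-k}\le(\lvert x\rvert\vee\lvert y\rvert)^n\le\lvert x\rvert^n+\lvert y\rvert^n$; then applying the positive operator $\lvert T_P\rvert$ kills everything. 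Homogeneity handles scalar multiples immediately since $\lvert T_P\rvert(\lvert(\lambda x)^n\rvert)=\lvert\lambda\rvert^n\lvert T_P\rvert(\lvert x\rvert^n)$.

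For the band assertion, assume $P$ (equivalently $T_P$, by the result quoted in the introduction) is order continuous, and let $0\le x_\alpha\nearrow x$ in $E$ with each $x_\alpha\in N_P$. I would show $x\in N_P$: since the $f$-algebra multiplication of $E^u$ is order continuous (a fact explicitly flagged in the introduction — "by using the order continuity of the $f$-algebra product"), the map $t\mapsto t^n$ is order continuous on $(E^u)^+$, so $x_\alpha^n\nearrow x^n$ in $E^u$. Because $T_P$ is order bounded and order continuous — and here I should be slightly careful that order continuity of $T_P$ as an operator into the Dedekind complete space $F$ passes to $\lvert T_P\rvert$, which it does by the Riesz–Kantorovich machinery for order continuous operators — we get $\lvert T_P\rvert(x_\alpha^n)\nearrow\lvert T_P\rvert(x^n)$. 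Each term on the left is $0$, hence $\lvert T_P\rvert(x^n)=\lvert T_P\rvert(\lvert x^n\rvert)=0$, i.e. $x\in N_P$. Combined with the fact that $N_P$ is already an ideal, this shows $N_P$ is a band.

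The \textbf{main obstacle} I anticipate is the transfer of order continuity from $T_P$ to $\lvert T_P\rvert$ and the verification that $t\mapsto t^n$ is genuinely order continuous (not merely monotone) on the positive cone of $E^u$; the latter needs the order continuity of the $f$-algebra product together with an induction on $n$, handling the product of two order-continuously-convergent nets, which requires the standard estimate $a_\alpha b_\alpha - ab = a_\alpha(b_\alpha-b)+(a_\alpha-a)b$ and order-boundedness of the nets. A secondary nuisance is making the binomial-expansion domination in the $f$-algebra fully rigorous — one must stay inside $E^u$, where the product is defined and commutative, rather than in $E$ or $E^{ru}$, and then observe that the relevant elements $\lvert x\rvert^k\lvert y\rvert^{n-k}$ actually lie in $\prod E^{ru}$ so that $T_P$ is defined on them. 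Once those two points are settled, the rest is routine: solidity is a one-line consequence of monotonicity of the power map and positivity of $\lvert T_P\rvert$, and the band property is the order-continuity argument above.
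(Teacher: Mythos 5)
Your argument is correct, but the core step --- closure of $N_P$ under addition --- is handled quite differently from the paper. The paper fixes $x,y\in(N_P)^+$, expands $P(\lambda x+y)=\sum_{k=1}^{n-1}C_n^k\lambda^kT_P(x^ky^{n-k})$ (the extreme terms vanish since $x,y\in N_P$), and deduces that all mixed coefficients $T_P(x^ky^{n-k})$ vanish from the fact that this expression is nonnegative for every real $\lambda$; this forces a case split on the parity of $n$, and for odd $n$ the paper squares the expression, works in the $f$-algebra $F^u$, and invokes semiprimeness. Your route replaces that sign analysis by the purely order-theoretic domination $|x|^k|y|^{n-k}\le(|x|\vee|y|)^n=|x|^n\vee|y|^n\le|x|^n+|y|^n$ in $E^u$, after which positivity of $|T_P|$ kills every term of the binomial expansion of $(|x|+|y|)^n\ge|x+y|^n$. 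This is uniform in $n$, never leaves the domain side, and avoids both the parity split and the passage to $F^u$; the price is that you must justify the standard $f$-algebra identities $|a^n|=|a|^n$ and $(a\vee b)^n=a^n\vee b^n$ for $0\le a,b$, which you should cite (e.g.\ from [16]) or prove. You also supply the band half of the statement --- order continuity of $t\mapsto t^n$ on $(E^u)^+$ together with the fact that order continuity of $T_P$ passes to $|T_P|$ because the order continuous operators form a band in $\mathcal{L}_b$ --- which the paper's proof actually omits altogether. The two obstacles you flag are real but routine, and both facts are already used without comment elsewhere in the paper (the proof of the Nakano theorem uses $y_k^n\searrow 0$ when $y_k\searrow 0$), so your proof is complete as outlined and, if anything, more self-contained than the published one.
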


\begin{proof}
As mentioned in the beginning of this part we can assume that $P$ and then
$T_{P}$ are positive. It is only we need to prove that $N_{p}$ is a vector
space. We begin by proving the case where $n$ is even. Let $x,y\in(N_{P})^{+}$
then we have that
\[
P(\lambda x+y)=T_{P}(\lambda x+y)^{n}=\overset{n-1}{\underset{k=1}{%
{\displaystyle\sum}
}}C_{n}^{k}\lambda^{k}T_{P}(x^{k}y^{n-k})\geq0
\]
for all $\lambda\in\mathcal{%
\mathbb{R}
}$ which implies that
\[
T_{P}(x^{k}y^{n-k})=0\text{ for all }1\leq k\leq n-1\text{.}%
\]
Thus $P(\lambda x+y)=0$ for all $\lambda\in\mathcal{%
\mathbb{R}
}$ and $x,y\in(N_{P})^{+}$. Now from the preceding Lemma we have
\[
0\leq P(\left\vert \lambda x+y\right\vert )\leq P(\left\vert \lambda
\right\vert \left\vert x\right\vert +\left\vert y\right\vert )=0\text{ for all
}x,y\in N_{P}\text{ and }\lambda\in\mathcal{%
\mathbb{R}
}%
\]
So $P(\left\vert \lambda x+y\right\vert )=0$. Therefore $\lambda x+y\in N_{P}$
for all $x,y\in N_{P}$ and $\lambda\in\mathcal{%
\mathbb{R}
}.$ Secondly if $n$ is odd. Let $x,y\in(N_{P})^{+}$ then we have
\[
P(\lambda x+y)^{2}=(T_{P}(\lambda x+y)^{n})^{2}=\overset{n-1}{\underset{k=1}{%
{\displaystyle\sum}
}}\overset{n-1}{\underset{j=1}{%
{\displaystyle\sum}
}}C_{n}^{k}C_{n}^{j}\lambda^{k+j}T_{P}(x^{j}y^{n-j})T_{P}(x^{k}y^{n-k}%
)\geq0\text{ }%
\]
for all $\lambda\in\mathcal{%
\mathbb{R}
}$ (where here the product is of the $f$-algebra $F^{u}$) which implies that
\[
T_{P}(x^{j}y^{n-j})T_{P}(x^{k}y^{n-k})=0\text{ for all }1\leq k\leq n-1\text{
and }1\leq j\leq n-1\text{.}%
\]
In particular for $j=k,$ we have
\[
T_{P}(x^{k}y^{n-k})=0\text{ for all }1\leq k\leq n-1\text{.}%
\]
because $F^{u}$ is semiprime. Thus $P(\lambda x+y)=0$ for all $\lambda
\in\mathcal{%
\mathbb{R}
}$ and $x,y\in N_{P}$. Since by the preceding lemma
\[
0\leq P(\left\vert \lambda x+y\right\vert )\leq P(\left\vert \lambda
x+y\right\vert )\leq P(\left\vert \lambda\right\vert \left\vert x\right\vert
+\left\vert y\right\vert )=0
\]
for all $x,y\in N_{P}$ and $\lambda\in\mathcal{%
\mathbb{R}
}$. We deduce that $P(\left\vert \lambda x+y\right\vert )=0$. Therefore
$\lambda x+y\in N_{P}$ for all $x,y\in N_{P}$ and $\lambda\in\mathcal{%
\mathbb{R}
}$ and the proof is finished.
\end{proof}

All the preparations have been made for the first central result in the paper.
H. Nakano in [2] has shown that two linear functionals are disjoint if and
only if their carriers are disjoint sets. In the next result we prove a Nakano
type theorem for order continuous $n$-homogeneous orthogonally additive polynomials.

\begin{theorem}
Let $P,Q\in\mathcal{P}_{ob}(^{n}E,%
\mathbb{R}
)^{+}$ such that $P$ or $Q$ is order continuous. Then the following are equivalents

\begin{enumerate}
\item $P\perp Q$

\item $C_{P}\subset N_{Q}$

\item $C_{Q}\subset N_{P}$

\item $C_{P}\perp C_{Q}$
\end{enumerate}
\end{theorem}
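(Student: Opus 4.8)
The plan is to establish the chain of implications $(1)\Rightarrow(2)\Rightarrow(4)\Rightarrow(3)\Rightarrow(1)$, using the representation $P(x)=T_P(x^n)$ together with the formula $|P|(x)=|T_P|(x^n)$ on $E^+$ and the lattice isomorphism $P\mapsto T_P$ between $\mathcal P_{ob}({}^nE,\mathbb R)$ and $\mathcal L_b(\prod^n E^{ru},\mathbb R)$. A useful preliminary observation is that, since $P$ and $Q$ are positive, $P\wedge Q=0$ in $\mathcal P_{ob}({}^nE,\mathbb R)$ is equivalent to $T_P\wedge T_Q=0$ in $\mathcal L_b(\prod^n E^{ru},\mathbb R)$, and by the Riesz–Kantorovich formula the latter says that for every $u\in(\prod^n E^{ru})^+$ one has $\inf\{T_P(v)+T_Q(w):v,w\ge0,\ v+w=u\}=0$; applied to $u=x^n$ for $x\in E^+$ this gives $\inf\{P(a)+Q(b)\}$-type control which we will exploit.

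For $(1)\Rightarrow(2)$: assume $P\wedge Q=0$ and take $x\in C_P=N_P^{\,d}$; I want $|Q|(|x|)=Q(|x|)=0$. Since by Theorem (the null-ideal theorem) $N_Q$ is an ideal, it suffices to treat $0\le x\in C_P$. The idea is that $x^n\in E^{ru}$ and, using $P\wedge Q=0$ at the level of operators, for any $\varepsilon>0$ we can split $x^n=v+w$ with $v,w\ge0$, $T_P(v)<\varepsilon$, $T_Q(w)<\varepsilon$; the component $v$ must then be ``small relative to'' the carrier of $P$ and $w$ small away from it, and pushing $\varepsilon\to0$ forces $T_Q(x^n)=Q(x)=0$. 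Making this splitting argument live inside $E$ rather than only inside $E^{ru}$ (so that we genuinely conclude $x\in N_Q\subset E$) is the first technical point; here one uses that $E$ is order dense and majorizing in $E^{ru}$ and that $x\mapsto x^n$ behaves well with disjointness, so that disjoint components of $x$ produce disjoint $n$-th powers.

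For $(2)\Rightarrow(4)$: if $C_P\subset N_Q$ then $C_P\subset N_Q\cap C_Q^{\,dd}$... more directly, $C_P\subset N_Q$ and $C_Q=N_Q^{\,d}$ give $C_P\subset N_Q\subset (C_Q)^d$, hence $C_P\perp C_Q$. The implication $(4)\Rightarrow(3)$ is symmetric bookkeeping: $C_P\perp C_Q$ means $C_P\subset C_Q^{\,d}=N_Q^{\,dd}\supseteq N_Q$; one needs the reverse inclusion $C_Q\subset N_P$, which follows because $C_P\perp C_Q$ gives $C_Q\subset C_P^{\,d}=N_P^{\,dd}$, and then using order continuity of $P$ (or $Q$) one upgrades $N_P^{\,dd}$-membership to $N_P$-membership via the fact that for order-continuous $P$ the band $N_P$ equals $C_P^{\,d}$, i.e. $N_P$ is a band (again the previous Theorem) so $N_P^{\,dd}=N_P$. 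This is exactly where order continuity of one of the two polynomials is indispensable — without it $N_P$ need only be an ideal, $N_P^{\,dd}$ can be strictly larger, and the implication fails, mirroring the linear Nakano situation.

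For $(3)\Rightarrow(1)$: assume $C_Q\subset N_P$; I must show $P\wedge Q=0$, equivalently (by the isomorphism) $T_P\wedge T_Q=0$. Decompose $E^{ru}$ (or work with $x\in E^+$ and decompose $x=x_1+x_2$ with $x_1$ in the band $C_Q$ and $x_2$ in $N_Q=C_Q^{\,d}$, which is possible in the Dedekind complete $E^\delta$ and transfers to $E^{ru}$); then $x_1^n$ and $x_2^n$ are disjoint, $P(x_1)=0$ because $x_1\in C_Q\subset N_P$, and $Q(x_2)=0$ because $x_2\in N_Q$. Using $P(x)=T_P(x^n)$, orthogonal additivity, and positivity, this yields $\big(P\wedge Q\big)(x)\le P(x_1)+Q(x_2)=0$ for all $x\in E^+$, whence $P\wedge Q=0$. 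The main obstacle throughout is the passage between the three ambient spaces $E\subset E^{ru}\subset E^u$: the carrier and null ideal are defined inside $E$, the disjointness of $P,Q$ lives in $\mathcal L_b(\prod^n E^{ru},\mathbb R)$, and band decompositions are only guaranteed in a Dedekind complete space; so each implication requires carefully lifting an element of $E$ to $E^{ru}$, decomposing there, and checking the pieces come from (or are controlled by) elements of $E$, using that $E$ is order dense and majorizing and that the $f$-algebra product is order continuous and maps disjoint elements to disjoint elements.
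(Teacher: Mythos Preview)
Your outline has two genuine gaps, and both stem from mislocating where order continuity does the work.

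In $(1)\Rightarrow(2)$, the sketch ``split $x^n=v+w$ with $T_P(v)<\varepsilon$, $T_Q(w)<\varepsilon$, and push $\varepsilon\to0$'' does not yield $T_Q(x^n)=0$: nothing controls $T_Q(v)$. The missing mechanism is precisely where order continuity of $Q$ enters (not only in $(4)\Rightarrow(3)$ as you claim). One must choose, for each $k$, an element $0\le x_k\le x$ with $T_P(x_k^n)+T_Q(x^n-x_k^n)<\varepsilon/2^k$, set $y_k=\bigwedge_{i\le k}x_i$, and use $x\in C_P$ to show $y_k\downarrow 0$: any common lower bound $0\le y\le y_k$ satisfies $P(y)\le P(x_k)<\varepsilon/2^k$ for all $k$, hence $y\in N_P\cap C_P=\{0\}$. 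Only then does order continuity of $T_Q$ give $T_Q(x^n-y_k^n)\uparrow Q(x)$, while $T_Q(x^n-y_k^n)\le\sum_{i\le k}T_Q(x^n-x_i^n)<\varepsilon$ finishes. The difficulty you flag (pieces living in $E$) is secondary; the monotone-sequence-plus-carrier argument is the actual content.

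In $(3)\Rightarrow(1)$ you want to write an arbitrary $x\in E^+$ as $x_1+x_2$ with $x_1\in C_Q$, $x_2\in N_Q$, appealing to $E^\delta$ or $E^{ru}$. But $N_Q,C_Q$ are subsets of $E$, and an Archimedean $E$ need not have band projections, so for generic $x$ no such decomposition exists in $E$; passing to a completion does not help because $P,Q$ and their null ideals live in $E$. The correct move is to show $(P\wedge Q)(y+z)\le Q(y)+P(z)=0$ only for $y+z$ in the order dense ideal $N_Q\oplus C_Q$ (where the decomposition is tautological), and then use that $0\le P\wedge Q\le Q$ is order continuous to conclude $P\wedge Q=0$ on all of $E$. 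This is a second essential use of order continuity that your plan omits. A related issue: in $(4)\Rightarrow(3)$ you invoke ``order continuity of $P$ (or $Q$)'' to get $N_P^{dd}=N_P$, but your argument needs $N_P$ to be a band, which requires $P$ (not $Q$) to be order continuous; you have not said how to handle the case when only $Q$ is.
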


\begin{proof}
First note that by symmetry we can assume that $Q$ is order continuous. For
the implication $1\Rightarrow2$ let $0\leq x\in C_{P}=N_{P}^{d}$ from the fact
that $P\wedge Q=0$ we have $P\wedge Q(x)=T_{P}\wedge T_{Q}(x^{n})=0$ Now by
the Riesz Kantorovic theorem (see [2])we have
\[
T_{P}\wedge T_{Q}(x^{n})=\inf\left\{  T_{P}(y^{n})+T_{Q}(z^{n}):y,z\in
E^{+}\text{ such that }y^{n}+z^{n}=x^{n}\right\}  =0.
\]
So let $\epsilon>0$ there exists a sequences $(x_{k})\in E^{+}$ satisfying
\[
0\leq x_{k}^{n}\leq x^{n}\text{, so }0\leq x_{k}\leq x,\text{ and }T_{P}%
(x_{k}^{n})+T_{Q}(x^{n}-x_{k}^{n})<\frac{\epsilon}{2^{k}}\text{.}%
\]
Put $y_{k}=\underset{i=1}{\overset{k}{\wedge}}x_{i}$ and note that
$y_{k}\searrow0$ in $E$ indeed $0\leq y\leq y_{k}$ then $0\leq y^{n}\leq
y_{k}^{n}$ in $\underset{i=!}{\overset{n}{\Pi}}E^{ru}$ \ So by Lemma 1 we
have
\[
0\leq P(y)=T_{P}(y^{n})\leq P(y_{k})=T_{p}(y_{k}^{n})\leq P(x_{k}%
)<\frac{\epsilon}{2^{k}}\text{ for all }k\in%
\mathbb{N}
\text{.}%
\]
Consequently $P(y)=0$. Thus $y\in C_{P}\cap N_{P}=\left\{  0\right\}  $.so
$y=0.$ Now since $T_{Q}$ is order continuous (because $Q$ is order continuous)
and $y_{k}^{n}\searrow0$ in $\underset{i=!}{\overset{n}{\Pi}}E^{ru}$ (because
the product of $f$-algebra is order continuous) we have $T_{Q}(x^{n}-y_{k}%
^{n})\nearrow T_{Q}(x^{n})=Q(x)$. On the other hand
\begin{align*}
0  &  \leq T_{Q}(x^{n}-y_{k}^{n})=T_{Q}(\underset{i=1}{\overset{k}{\vee}%
}(x^{n}-x_{i}^{n}))\\
&  \leq T_{Q}(\overset{k}{\underset{i=1}{%
{\displaystyle\sum}
}}(x^{n}-x_{i}^{n}))=\overset{k}{\underset{i=1}{%
{\displaystyle\sum}
}}T_{Q}(x^{n}-x_{i}^{n})\\
&  \leq\overset{k}{\underset{i=1}{%
{\displaystyle\sum}
}}\frac{\epsilon}{2^{i}}<\epsilon.
\end{align*}
Which implies that
\[
Q(x)=0.
\]
Thus $C_{P}\subset N_{Q}$.

Now for $2\Rightarrow3$ from $C_{P}\subset N_{Q}$ we have
\[
N_{Q}^{d}=C_{Q}\subset C_{P}^{d}=N_{P}^{dd}=N_{P}\text{ (because }N_{P}\text{
is a band).}%
\]

For the implication $3\Rightarrow4$. From $C_{Q}\subset N_{P}$ we have
$C_{P}\perp C_{Q}$ because $N_{P}\perp C_{P}$.

Now for the implication $4\Rightarrow1$. It follows from $C_{P}\perp C_{Q}$
that $C_{Q}\subset C_{P}^{d}=N_{P}$. Now if $x=y+z\in N_{Q}\oplus C_{Q}$ we
have
\[
0\leq(P\wedge Q)(x)=(P\wedge Q)(y)+(P\wedge Q)(z)\leq Q(y)+P(z)=0.
\]
Thus $P\wedge Q=0$ holds on the order dense ideal $N_{P}\oplus C_{Q}$ on the
other hand $P\wedge Q$ is order continuous then it follows that
\[
P\wedge Q=0\text{.}%
\]
and the proof is finished.
\end{proof}

The following Proposition will be of great use next.

\begin{proposition}
Let $E$ be an Archimedean Riesz space and $(B_{i})_{i\in I}$ a set of bands in
$E$ then we have
\[
(\underset{i\in I}{\cap}B_{i})^{d}=(\underset{i\in I}{%
{\displaystyle\sum}
}B_{i}^{d})^{dd}%
\]

\end{proposition}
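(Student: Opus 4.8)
The plan is to prove the two inclusions $(\bigcap_{i\in I}B_i)^{d}\subseteq(\sum_{i\in I}B_i^{d})^{dd}$ and $(\sum_{i\in I}B_i^{d})^{dd}\subseteq(\bigcap_{i\in I}B_i)^{d}$ separately, using only elementary properties of disjoint complements: the inclusion-reversing property $A\subseteq B\Rightarrow B^{d}\subseteq A^{d}$, the fact $A\subseteq A^{dd}$, the identity $A^{ddd}=A^{d}$, and that $A^{d}$ is always a band. The second inclusion is the easy one: for each $j\in I$ we have $\bigcap_{i\in I}B_i\subseteq B_j$, hence $B_j^{d}\subseteq(\bigcap_{i\in I}B_i)^{d}$; since $(\bigcap_{i\in I}B_i)^{d}$ is a band (in particular a linear subspace), it contains $\sum_{i\in I}B_i^{d}$, and taking $dd$ of both sides (again inclusion-preserving, and $((\bigcap B_i)^{d})^{dd}=(\bigcap B_i)^{d}$ since a disjoint complement equals its own double complement) gives $(\sum_{i\in I}B_i^{d})^{dd}\subseteq(\bigcap_{i\in I}B_i)^{d}$.

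For the reverse inclusion the strategy is to show $(\sum_{i\in I}B_i^{d})^{d}\subseteq\bigcap_{i\in I}B_i$ and then apply $d$ to both sides. Indeed, if $x\perp\sum_{i\in I}B_i^{d}$, then in particular $x\perp B_i^{d}$ for every $i$, so $x\in B_i^{dd}$ for every $i$; but $B_i$ is a band, so $B_i^{dd}=B_i$, whence $x\in\bigcap_{i\in I}B_i$. This proves $(\sum_{i\in I}B_i^{d})^{d}\subseteq\bigcap_{i\in I}B_i$. Applying the inclusion-reversing operation $d$ yields $(\bigcap_{i\in I}B_i)^{d}\subseteq(\sum_{i\in I}B_i^{d})^{dd}$, which is the desired inclusion. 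Combining the two gives the claimed equality.

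The only slightly delicate point — and the step I would flag as the main thing to get right rather than a genuine obstacle — is the use of $B_i^{dd}=B_i$, which is exactly where the hypothesis that each $B_i$ is a band (not merely an ideal) is used; in an Archimedean Riesz space every band equals its double disjoint complement, so this is legitimate. Everything else is formal manipulation of the Galois connection $A\mapsto A^{d}$, and no appeal to Dedekind completeness, order continuity, or the polynomial structure is needed. I would write the argument in four or five lines, citing [2] for the basic facts $B^{dd}=B$ for bands and $A^{ddd}=A^{d}$ in general.
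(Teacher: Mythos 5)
Your proof is correct, and it takes a genuinely more direct route than the paper's. The paper first proves the auxiliary identity $(B_1+B_2)^d=B_1^d\cap B_2^d$ (the nontrivial inclusion via the lattice inequality $|x|\wedge|y+z|\le|x|\wedge|y|+|x|\wedge|z|$), then substitutes $B_i^d$ for $B_i$ and uses $B_i^{dd}=B_i$ to get the two-band case, explicitly leaving the general index set ``to the reader.'' You instead argue directly for an arbitrary family, the key step being that an element disjoint from $\sum_{i}B_i^d$ is disjoint from each $B_i^d$, hence lies in $B_i^{dd}=B_i$ for every $i$; one application of $d$ then yields the hard inclusion, while the easy one follows from antitonicity of $A\mapsto A^d$, $A^{ddd}=A^d$, and the fact that disjoint complements are linear subspaces. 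Both arguments rest on the same standard facts from [2,15], and you correctly identify $B^{dd}=B$ (valid for bands in an Archimedean space) as the only point where the hypotheses are genuinely used. Your version has a real advantage: the paper's reduction to $\mathrm{card}(I)=2$ only gives finite index sets by induction, so a direct argument like yours is actually what is needed to settle the general case the paper defers. A cosmetic simplification: since in fact $\bigl(\sum_{i}B_i^d\bigr)^d=\bigcap_i B_i^{dd}=\bigcap_i B_i$ holds as an equality (every element of $B_i$ is disjoint from $B_i^d$ by definition), a single application of $d$ to this identity proves the proposition in one line, making your separate ``easy inclusion'' unnecessary.
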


\begin{proof}
It is clear that it is sufficient to prove that if $card(I)=2$. Now let us
prove that $(B_{1}+B_{2})^{d}=B_{1}^{d}\cap B_{2}^{d}$. It follows from
$B_{1}\subset B_{1}+B_{2}$ and $B_{2}\subset B_{1}+B_{2}$ that $(B_{1}%
+B_{2})^{d}\subset B_{1}^{d}\cap B_{2}^{d}$. For the converse inclusion, let
$x\in B_{1}^{d}\cap B_{2}^{d}$ and $y+z\in B_{1}+B_{2}$. In view of the vector
lattice inequality $0\leq\left\vert x\right\vert \wedge\left\vert
y+z\right\vert \leq\left\vert x\right\vert \wedge(\left\vert y\right\vert
+\left\vert z\right\vert )\leq\left\vert x\right\vert \wedge\left\vert
y\right\vert +\left\vert x\right\vert \wedge\left\vert z\right\vert =0$ we
have $x\perp(y+z)$ which implies that $B_{1}^{d}\cap B_{2}^{d}\subset
(B_{1}+B_{2})^{d}.$ Therefore
\[
(B_{1}+B_{2})^{d}=B_{1}^{d}\cap B_{2}^{d}\text{.}%
\]
By replacing $B_{1}$ by $B_{1}^{d}$, $B_{2}$ by $B_{2}^{d}$ (because are also
a bands) we have
\[
(B_{1}^{d}+B_{2}^{d})^{dd}=(B_{1}^{dd}\cap B_{2}^{dd})^{d}=(B_{1}\cap
B_{2})^{d}\text{.}%
\]
The general case is left to the reader. The proof is finished.
\end{proof}

As an immediate application to the above proposition we obtain the following result.

\begin{theorem}
Let $P\in\mathcal{P}_{ob}(^{n}E,C(Y))$ where $C(Y)$ is the space of continuous
functions in a topological Hausdorff space $Y$. Then
\[
N_{P}=\underset{y\in Y}{\cap}N_{\delta_{y}\circ P}\text{ and }C_{P}%
=(\underset{y\in Y}{%
{\displaystyle\sum}
}C_{\delta_{y}\circ P})^{dd}%
\]
where $\delta_{y}$ is the evaluation at $y$.
\end{theorem}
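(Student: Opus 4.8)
The plan is to reduce the statement to the Proposition by expressing the null ideal $N_P$ of a $C(Y)$-valued polynomial as the intersection of the null ideals $N_{\delta_y\circ P}$, and then taking disjoint complements. First I would record the elementary observation that for $x\in E^+$ one has $|P|(x)=0$ in $C(Y)$ if and only if $\bigl(|P|(x)\bigr)(y)=0$ for every $y\in Y$; since each $\delta_y$ is a positive (lattice) homomorphism from $C(Y)$ to $\mathbb R$, the composition $\delta_y\circ P$ lies in $\mathcal P_{ob}(^nE,\mathbb R)^+$ and $|\delta_y\circ P|=\delta_y\circ|P|$, so that $\bigl(|P|(x)\bigr)(y)=|\delta_y\circ P|(x)$. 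Hence $x\in N_P$ precisely when $|\delta_y\circ P|(x)=0$ for all $y$, i.e. $x\in\bigcap_{y\in Y}N_{\delta_y\circ P}$. This gives the first displayed equality directly.

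Next I would invoke the earlier Theorem stating that $N_Q$ is an ideal for any order bounded $Q$ (and a band when $Q$ is order continuous), so each $N_{\delta_y\circ P}$ is an ideal; but to apply the Proposition, which is stated for \emph{bands}, I need to be slightly careful: the Proposition is about families of bands, whereas $N_{\delta_y\circ P}$ need only be an ideal in general. The remedy is to take disjoint complements before applying the Proposition. From $N_P=\bigcap_{y\in Y}N_{\delta_y\circ P}$ we get
\[
C_P=N_P^{d}=\Bigl(\bigcap_{y\in Y}N_{\delta_y\circ P}\Bigr)^{d}.
\]
Now for any family of \emph{subsets} $A_i$ of a Riesz space one has $\bigl(\bigcap_i A_i\bigr)^{d}\supseteq\bigl(\sum_i A_i^{d}\bigr)$, and taking double-disjoint-complements and using that $C_P$ is already a band, it suffices to show the reverse containment $C_P\subseteq\bigl(\sum_{y}C_{\delta_y\circ P}\bigr)^{dd}$. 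Since $C_{\delta_y\circ P}=N_{\delta_y\circ P}^{d}$ is a band, the Proposition applies to the family $\{N_{\delta_y\circ P}^{dd}\}_{y}$ of bands and yields $\bigl(\bigcap_y N_{\delta_y\circ P}^{dd}\bigr)^{d}=\bigl(\sum_y N_{\delta_y\circ P}^{d}\bigr)^{dd}=\bigl(\sum_y C_{\delta_y\circ P}\bigr)^{dd}$. It then remains to reconcile $\bigl(\bigcap_y N_{\delta_y\circ P}\bigr)^{d}$ with $\bigl(\bigcap_y N_{\delta_y\circ P}^{dd}\bigr)^{d}$: the inclusion $N_{\delta_y\circ P}\subseteq N_{\delta_y\circ P}^{dd}$ is automatic, so $\bigcap_y N_{\delta_y\circ P}\subseteq\bigcap_y N_{\delta_y\circ P}^{dd}$, whence $\bigl(\bigcap_y N_{\delta_y\circ P}^{dd}\bigr)^{d}\subseteq\bigl(\bigcap_y N_{\delta_y\circ P}\bigr)^{d}=C_P$; and for the opposite inclusion one uses that $x\in N_{\delta_y\circ P}^{dd}$ together with $x\perp N_{\delta_y\circ P}^{d}$ forces, via the band decomposition, the relevant disjointness, giving $C_P\subseteq\bigl(\bigcap_y N_{\delta_y\circ P}^{dd}\bigr)^{d}$. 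Combining the displayed identities yields $C_P=\bigl(\sum_{y\in Y}C_{\delta_y\circ P}\bigr)^{dd}$.

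The step I expect to be the main obstacle is precisely this passage from ideals to bands: the Proposition is phrased for bands, but the null ideals $N_{\delta_y\circ P}$ need not be bands unless the polynomials are order continuous, so one cannot apply it verbatim. The clean way around it — and the way I would write it up — is to observe that $\bigl(\bigcap_i A_i\bigr)^{d}=\bigl(\bigcap_i A_i^{dd}\bigr)^{d}$ holds for arbitrary subsets $A_i$ (both sides equal $\bigl(\bigcap_i A_i^{dd}\bigr)^{d}$ because $S^{d}=S^{dddd}$ and disjoint complements only see the generated band), so that replacing each $N_{\delta_y\circ P}$ by its band closure $N_{\delta_y\circ P}^{dd}$ changes nothing on the left-hand side, after which the Proposition is directly applicable. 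If instead one wants to stay concrete, the same conclusion follows by checking directly that $x\perp\bigcap_y N_{\delta_y\circ P}$ if and only if $x$ lies in the band generated by $\bigcup_y C_{\delta_y\circ P}$, using the vector-lattice inequality $|x|\wedge(|u|+|v|)\le|x|\wedge|u|+|x|\wedge|v|$ exactly as in the proof of the Proposition. Either route closes the argument.
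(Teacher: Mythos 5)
Your proof of the first identity $N_P=\bigcap_{y\in Y}N_{\delta_y\circ P}$ is essentially the paper's own argument (via $|\delta_y\circ P|=\delta_y\circ|P|$ for the lattice homomorphism $\delta_y$), and you have correctly isolated the delicate point in the second identity: the Proposition is stated for families of \emph{bands}, whereas the null ideals $N_{\delta_y\circ P}$ are in general only ideals, since $\delta_y\circ P$ need not be order continuous (evaluation at a point of $Y$ is not order continuous on $C(Y)$ in general). The paper's proof applies the Proposition to these ideals without comment, so you are being more scrupulous than the source. However, your repair does not work. Everything reduces, as you say, to the identity $\bigl(\bigcap_{y}N_{\delta_y\circ P}\bigr)^{d}=\bigl(\bigcap_{y}N_{\delta_y\circ P}^{dd}\bigr)^{d}$, and this is \emph{false} for infinite families of ideals. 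It does hold for finitely many ideals, because $I\cap J$ is order dense in $I^{dd}\cap J^{dd}$, but the principle ``disjoint complements only see the generated band'' applies to a single set (where $S^{ddd}=S^{d}$), not to an intersection: the band generated by $\bigcap_i A_i$ can be strictly smaller than $\bigcap_i A_i^{dd}$, and then the two disjoint complements differ. Your sketched argument for the reverse inclusion (``via the band decomposition'') does not produce a proof, and cannot.

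Concretely, in $E=C[0,1]$ put $I_y=\{f:f(y)=0\}$ for $y\in[0,1]$. Each $I_y$ is an ideal with $I_y^{d}=\{0\}$ and hence $I_y^{dd}=E$, so $\bigl(\bigcap_y I_y^{dd}\bigr)^{d}=E^{d}=\{0\}$, while $\bigcap_y I_y=\{0\}$ gives $\bigl(\bigcap_y I_y\bigr)^{d}=E$. These ideals arise exactly as the null ideals in the theorem: for the positive order bounded orthogonally additive $2$-homogeneous polynomial $P(f)=f^{2}$ from $C[0,1]$ to $C[0,1]$ one has $N_{\delta_y\circ P}=I_y$, $N_P=\{0\}$ and $C_P=C[0,1]$, yet $C_{\delta_y\circ P}=I_y^{d}=\{0\}$ for every $y$, so $\bigl(\sum_{y}C_{\delta_y\circ P}\bigr)^{dd}=\{0\}\neq C_P$. (The same phenomenon occurs with $C(Y)$ Dedekind complete, e.g. for $Y$ extremally disconnected compact without isolated points.) So the gap you flagged is not merely presentational: the second displayed formula fails at this level of generality, and neither your route nor the paper's can close it without an additional hypothesis guaranteeing that each $N_{\delta_y\circ P}$ is a band (for instance, order continuity of every $\delta_y\circ P$), in which case the Proposition applies verbatim and no detour through $A_i^{dd}$ is needed.
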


\begin{proof}
Let $x\in N_{P}$ thus $\left\vert P\right\vert (\left\vert x\right\vert )=0$
which implies that $\left\vert P\right\vert (\left\vert x\right\vert )(y)=0$
for all $y\in Y$ then $\delta_{y}\circ\left\vert P\right\vert (\left\vert
x\right\vert )=0$. Now by using the fact that $\delta_{y}$ is a lattice and
order continuous operator we have%
\[
\delta_{y}\circ\left\vert P\right\vert =\left\vert \delta_{y}\circ
P\right\vert .
\]
Thus $\left\vert \delta_{y}\circ P\right\vert \left\vert x\right\vert =0.$
Which implies that $x\in N_{\delta_{y}\circ P}$ for all $y\in Y$. The converse
inclusion is obviously. Therefore
\[
N_{P}=\underset{y\in Y}{\cap}N_{\delta_{y}\circ P}\text{.}%
\]
Consequently by using the preceding Proposition we have
\[
C_{P}=N_{P}^{d}=(\underset{y\in Y}{\cap}N_{\delta_{y}\circ P})^{d}=(%
{\displaystyle\sum}
(N_{\delta_{y}\circ P})^{d})^{dd}=(%
{\displaystyle\sum}
C_{\delta_{y}\circ P})^{dd}\text{.}%
\]

\end{proof}

Next we give a Nakano type theorem in the space $\mathcal{P}_{oc}%
(^{n}E,C(Y))^{+}$.

\begin{theorem}
Let $P,Q\in\mathcal{P}_{ob}(^{n}E,C(Y))^{+}$ such that $P$ or $Q$ is order
continuous. Then following

\begin{enumerate}
\item $C_{P}\perp C_{Q}$

\item $P\perp Q$

\item $\delta_{y}\circ P\perp\delta_{y}\circ Q$ for all $y\in Y$.

\item $C_{\delta_{y}\circ P}\perp C_{\delta_{y}\circ Q}$ for all $y\in Y$.
\end{enumerate}

satisfies $1\Rightarrow2\Rightarrow3\Leftrightarrow4$.
\end{theorem}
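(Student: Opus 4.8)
The plan is to prove the three implications $1\Rightarrow 2$, $2\Rightarrow 3$ and $3\Leftrightarrow 4$ by the reduction, already used for the $C(Y)$-valued results above, of composing with the evaluation maps $\delta_y$ and falling back on the real-valued Nakano theorem (Theorem 3). The preliminary observations are that, for each $y\in Y$, $\delta_y:C(Y)\to\mathbb{R}$ is a positive order continuous Riesz homomorphism; hence $\delta_y\circ P$ and $\delta_y\circ Q$ lie in $\mathcal{P}_{ob}(^{n}E,\mathbb{R})^{+}$ with associated operators $\delta_y\circ T_P$ and $\delta_y\circ T_Q$, and whichever of $P,Q$ is order continuous, the corresponding $\delta_y\circ P$ or $\delta_y\circ Q$ is order continuous too, since a polynomial in $\mathcal{P}_{ob}$ is order continuous exactly when its associated operator is. All four statements being symmetric in $P$ and $Q$, I assume throughout that $Q$ is order continuous, so that $N_Q$ is a band by Theorem 2.

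For $1\Rightarrow 2$ I would reproduce the argument of the implication $4\Rightarrow 1$ in the proof of Theorem 3, with one point of care: since only $Q$, not $P$, is assumed order continuous, $N_P$ need not be a band, so the argument must be routed through $C_P$ rather than through $C_Q$. From $C_P\perp C_Q$ one gets $C_P\subseteq C_Q^{\,d}=N_Q^{\,dd}=N_Q$. Moreover $N_P\oplus C_P$ is an order dense ideal of $E$: it is an ideal, being the direct sum of the ideal $N_P$ and the band $C_P$, and one computes directly (as in the proof of Proposition 4) that $(N_P\oplus C_P)^{d}=N_P^{\,d}\cap C_P^{\,d}=C_P\cap N_P^{\,dd}=\{0\}$, whence its double disjoint complement is all of $E$. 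For $0\le x\in N_P\oplus C_P$, write $x=u+v$ with $0\le u\in N_P$ and $0\le v\in C_P$ (the components of a positive element in this direct sum are positive and mutually disjoint); since $P\wedge Q$ is orthogonally additive, Lemma 1 together with $u\in N_P$ and $v\in C_P\subseteq N_Q$ gives
\[
0\le (P\wedge Q)(x)=(P\wedge Q)(u)+(P\wedge Q)(v)\le P(u)+Q(v)=0 .
\]
Thus $P\wedge Q$ vanishes on the positive cone of the order dense ideal $N_P\oplus C_P$; since $0\le T_{P\wedge Q}=T_P\wedge T_Q\le T_Q$ with $T_Q$ order continuous, $T_{P\wedge Q}$, and hence $P\wedge Q$, is order continuous, and an order continuous orthogonally additive polynomial vanishing on an order dense ideal is identically zero, exactly as at the end of the proof of Theorem 3. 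Therefore $P\perp Q$.

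For $2\Rightarrow 3$, fix $y\in Y$; from $P\wedge Q=0$ we get $T_P\wedge T_Q=0$. The key is that composition with $\delta_y$ is a lattice homomorphism $\mathcal{L}_b\big(\prod_{i=1}^{n}E^{ru},C(Y)\big)\to\mathcal{L}_b\big(\prod_{i=1}^{n}E^{ru},\mathbb{R}\big)$: because $\delta_y$ is an order continuous Riesz homomorphism it preserves arbitrary existing suprema, so for an order bounded $R$ into $C(Y)$ and $u\ge 0$ one has $(\delta_y\circ R)^{+}(u)=\sup\{\delta_y(Rw):0\le w\le u\}=\delta_y\big(\sup\{Rw:0\le w\le u\}\big)=\delta_y(R^{+}(u))$, i.e. $(\delta_y\circ R)^{+}=\delta_y\circ R^{+}$, and hence $\delta_y\circ(R_1\wedge R_2)=(\delta_y\circ R_1)\wedge(\delta_y\circ R_2)$. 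Consequently
\[
(\delta_y\circ T_P)\wedge(\delta_y\circ T_Q)=\delta_y\circ(T_P\wedge T_Q)=\delta_y\circ 0=0,
\]
so $\delta_y\circ P\perp\delta_y\circ Q$ for every $y\in Y$. Finally, $3\Leftrightarrow 4$ follows from Theorem 3 applied, for each fixed $y$, to the pair $\delta_y\circ P,\ \delta_y\circ Q\in\mathcal{P}_{ob}(^{n}E,\mathbb{R})^{+}$ (one of which is order continuous by the first paragraph): there the statements $\delta_y\circ P\perp\delta_y\circ Q$ and $C_{\delta_y\circ P}\perp C_{\delta_y\circ Q}$ are equivalent, and quantifying over $y\in Y$ gives $3\Leftrightarrow 4$.

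The two places that need genuine care are the following. In $1\Rightarrow 2$, the fact that $N_P$ need not be a band forces one to argue via $C_P\subseteq N_Q$ and via the order density of $N_P\oplus C_P$, rather than copying the real-valued proof verbatim. In $2\Rightarrow 3$, the crux is that composition with $\delta_y$ is a lattice homomorphism at the level of associated operators, which is exactly where order continuity of the evaluations is used; note that one cannot shortcut $2\Rightarrow 3$ through the carrier decomposition of Theorem 5, because $P\perp Q$ does not force $C_P\perp C_Q$ in the $C(Y)$-valued setting (the carriers $C_{\delta_y\circ P}$ for different $y$ need not be mutually disjoint). The remaining checks (orthogonal additivity and order boundedness of $\delta_y\circ P$, positivity of the band components of a positive element, and the identity $S^{+}(u)=\sup\{Sv:0\le v\le u\}$) are routine.
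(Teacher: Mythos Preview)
Your proof is correct and follows essentially the same route as the paper: for $1\Rightarrow2$ you reproduce the argument of $4\Rightarrow1$ in Theorem~3, for $2\Rightarrow3$ you use that composition with the order continuous lattice homomorphism $\delta_y$ preserves the infimum in the operator lattice, and for $3\Leftrightarrow4$ you apply Theorem~3 pointwise. In fact your treatment of $1\Rightarrow2$ is more careful than the paper's own argument: you correctly observe that since only $Q$ is assumed order continuous, one must pass through $C_P\subseteq C_Q^{\,d}=N_Q^{\,dd}=N_Q$ and the order dense ideal $N_P\oplus C_P$, rather than through $C_Q\subseteq N_P$ (which would require $N_P$ to be a band), and you verify explicitly that $(N_P\oplus C_P)^{d}=\{0\}$ so that this ideal is indeed order dense.
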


\begin{proof}
The implication $1\Rightarrow2$ follows with the same techniques that prove
the implication \ $1\Rightarrow2$ in Theorem 3.

For $2\Rightarrow3\Leftrightarrow4$. Let $y\in Y$ and $\delta_{y}$ the
evaluation at $y$, from the fact that $P\perp Q$ and the order continuity of
$\delta_{y}$ we have $\delta_{y}\circ P\perp\delta_{y}\circ Q$. Thus from
Theorem 3 we have $3\Rightarrow4.$
\end{proof}

It is a natural question to ask whether the implication $2\Rightarrow1$ and
$3\Rightarrow1$ are true. The answer is negative and this illustrated by the
following example.

\begin{example}
Consider the two positive order continuous orthogonally additive
$2$-homogeneous polynomials $P,Q:L_{2}[0,1]\rightarrow L_{2}[0,1]$ defined by
\[
P(f)=(\int_{0}^{1}f(x)^{2}dx)\mathcal{X}_{[0,\dfrac{1}{2}]}\text{ and
}Q(f)=(\int_{0}^{1}f(x)^{2}dx)\mathcal{X}_{[\dfrac{1}{2},1]}.
\]
Observe that $P\perp Q$ because for every $0\leq f\in L_{2}[0,1]$ we have
\[
0\leq(P\wedge Q)(f)\leq P(f)\wedge Q(f)=0.
\]
Thus $P\wedge Q=0$ holds in $\mathcal{P}_{oc}(^{2}L_{2}[0,1])$. However
\[
N_{S}=N_{T}=\left\{  0\right\}  \text{ and so }C_{T}=C_{S}=L_{2}[0,1]
\]
proving that $C_{T}$ and $C_{S}$ are not disjoints sets proving that $C_{T}$
and $C_{S}$ are not disjoints sets. On the other hand it is clear that for all
$y\in\lbrack0,1]$ we have
\[
\delta_{y}\circ P=0\text{ or }\delta_{y}\circ Q=0
\]
which implies that $\delta_{y}\circ P\perp\delta_{y}\circ Q$ but $C_{T}%
=C_{S}=L_{2}[0,1]$.
\end{example}

So by the preceding example we can confirm that Nakano theorem fails in the
case of orthogonally additive polynomials between general Riesz spaces. It is
worth recording that by Theorem 5 we can do a little better for the
implication $4\Rightarrow1$ as illustrated in the following result.

\begin{theorem}
Let $P,Q\in\mathcal{P}_{oc}(^{n}E,C(Y))^{+}$ where here $E$ is an Archimedean
Riesz spaces Then the following are equivalents:

\begin{enumerate}
\item $C_{P}\perp C_{Q}$

\item $C_{\delta_{y}\circ P}\perp C_{\delta_{z}\circ Q}$ for all $y,z\in Y$.
\end{enumerate}
\end{theorem}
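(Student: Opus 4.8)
The plan is to prove both implications by reducing everything, via Theorem 6, to statements about the null ideals $N_{\delta_y\circ P}$ and $N_{\delta_z\circ Q}$, and then to use the disjointness-of-bands calculus from Proposition 2 (the formula $(\cap_i B_i)^d = (\sum_i B_i^d)^{dd}$) together with the identities $N_P = \cap_{y\in Y} N_{\delta_y\circ P}$ and $C_P = (\sum_{y\in Y} C_{\delta_y\circ P})^{dd}$ established there. The key algebraic fact I would isolate first is the elementary lattice identity $\big(\sum_{y} C_{\delta_y\circ P}\big)^{dd} \perp \big(\sum_{z} C_{\delta_z\circ Q}\big)^{dd}$ if and only if $C_{\delta_y\circ P}\perp C_{\delta_z\circ Q}$ for every pair $y,z$; this follows because a band $A$ is disjoint from a band $B$ iff $A\subset B^d$ iff $A^{dd}\subset B^d$, and disjointness distributes over sums of bands (if $u\perp B_z$ for all $z$ then $u\perp \sum_z B_z$, and then $u^{dd}\perp \sum_z B_z$, and finally $\big(\sum_z B_z\big)^{dd}$ inherits this).

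For the implication $2\Rightarrow1$: assume $C_{\delta_y\circ P}\perp C_{\delta_z\circ Q}$ for all $y,z\in Y$. By Theorem 6 we have $C_P = (\sum_{y\in Y} C_{\delta_y\circ P})^{dd}$ and $C_Q = (\sum_{z\in Y} C_{\delta_z\circ Q})^{dd}$. By the distributive disjointness fact just described, the pairwise disjointness of the generators $C_{\delta_y\circ P}$ and $C_{\delta_z\circ Q}$ forces $(\sum_y C_{\delta_y\circ P})^{dd}\perp (\sum_z C_{\delta_z\circ Q})^{dd}$, i.e. $C_P\perp C_Q$. This direction is essentially bookkeeping once the generating-set version of disjointness is in place.

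For the implication $1\Rightarrow2$: assume $C_P\perp C_Q$. Fix $y,z\in Y$. Since each $\delta_y$ is a positive order-continuous lattice homomorphism, $\delta_y\circ P$ and $\delta_y\circ Q$ lie in $\mathcal P_{oc}(^nE,\mathbb R)^+$; from $N_P\subset N_{\delta_y\circ P}$ (an immediate consequence of $N_P=\cap_{w}N_{\delta_w\circ P}$) we get $C_{\delta_y\circ P} = N_{\delta_y\circ P}^d\subset N_P^d = C_P$, and likewise $C_{\delta_z\circ Q}\subset C_Q$. Since $C_P\perp C_Q$, monotonicity of disjointness of subspaces gives $C_{\delta_y\circ P}\perp C_{\delta_z\circ Q}$. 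This direction is the shorter one; the only thing to check carefully is that $\delta_y\circ P$ is again order continuous and orthogonally additive of degree $n$, so that the objects $N_{\delta_y\circ P}$, $C_{\delta_y\circ P}$ are legitimate and the band identities of Theorem 6 apply.

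The main obstacle, such as it is, is the $1\Rightarrow2$ passage: one must be sure that $C_{\delta_y\circ P}\subset C_P$ holds with no extra hypothesis, which rests on the containment $N_P\subset N_{\delta_y\circ P}$ — and that containment is exactly the ``obvious'' half of Theorem 6 ($x\in N_P\Rightarrow |P|(|x|)=0\Rightarrow \delta_y(|P|(|x|))=|\delta_y\circ P|(|x|)=0$). Once that is granted, taking $d$-complements reverses the inclusion and the disjointness of the big carriers $C_P,C_Q$ descends to every pair of small carriers. No order-continuity of $P$ or $Q$ beyond what is assumed is needed, since the band structure of $N_{\delta_y\circ P}$ (Theorem 2) is what makes $C_{\delta_y\circ P}^{dd}=C_{\delta_y\circ P}$ and the complement manipulations valid.
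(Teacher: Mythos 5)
Your proposal is correct and follows essentially the same route as the paper: both directions rest on the identities $N_{P}=\cap_{y}N_{\delta_{y}\circ P}$ and $C_{P}=(\sum_{y}C_{\delta_{y}\circ P})^{dd}$, with $1\Rightarrow2$ coming from the containments $C_{\delta_{y}\circ P}\subset C_{P}$ and $C_{\delta_{z}\circ Q}\subset C_{Q}$, and $2\Rightarrow1$ from the fact that pairwise disjointness of the generating bands passes to the sums and then to their double disjoint complements. Your write-up is in fact slightly more careful than the paper's, since you justify explicitly why disjointness distributes over sums of bands and is preserved under the $dd$ operation, steps the paper merely asserts.
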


\begin{proof}
For $1\Rightarrow2$ from Theorem 5 it is clear that we have $C_{\delta
_{y}\circ P}\subset C_{P}$ and $C_{\delta_{z}\circ Q}\subset C_{Q}$. Now by
using the fact that $C_{P}\perp C_{Q}$ we have $C_{\delta_{y}\circ P}\perp
C_{\delta_{z}\circ Q}$.

For $2\Rightarrow1$. If $C_{\delta_{y}\circ P}\perp C_{\delta_{z}\circ Q}$ for
all $y,z\in Y$ then $%
{\displaystyle\sum}
C_{\delta_{y}\circ P}\perp%
{\displaystyle\sum}
C_{\delta_{y}\circ Q}$. Consequently $%
{\displaystyle\sum}
C_{\delta_{y}\circ P}\perp%
{\displaystyle\sum}
C_{\delta_{y}\circ Q}$ which implies from Theorem 5 that
\[
C_{P}=(%
{\displaystyle\sum}
C_{\delta_{y}\circ P})^{dd}\perp(%
{\displaystyle\sum}
C_{\delta_{y}\circ Q})^{dd}=C_{Q}%
\]
and the proof is finished.
\end{proof}

Next, we give from the theory of homogeneous orthogonally additive polynomials
a new concept, so called, order polynomial dual for Archimedean Riesz space.
It will be noted that the motivation for generalization of the classical order
dual came by and large from non linear functional analysis and its
applications. Indeed, it came to light that the classical order dual fails to
describe certain very interesting property of the Riesz space. In this
prospect, we introduce and give a systematic study of the order polynomial
dual for Archimedean Riesz space. This study is perhaps not quite in final
form yet. But we believe that our results in this direction are a little more
general that previously known results. surprisingly enough, to the best of our
knowledge no attention at all has been paid in the literature to this concept.

The vector space $\mathcal{P}_{oc}(^{n}E,%
\mathbb{R}
)$ of all order continuous $n$-homogeneous orthogonally additive polynomials
will be called the \textit{order continuous }$n$-homogeneous\textit{
orthogonally additive polynomial} \textit{dual} of $E$ and will be denoted by
$(^{n}E)^{\sim_{p}}$. Since $%
\mathbb{R}
$ is a Dedekind complete Riesz space, it follows at once from [] that
$(^{n}E)^{\sim_{p}}$ is precisely the vector space generated by the positive
orthogonally additive $n$-homogeneous polynomials. Moreover $(^{n}E)^{\sim
_{p}}$ is a Dedekind complete Riesz space. In general, there is no guarantee
that a Riesz space supports any non trivial orthogonally additive polynomials
of degree greater than one. Thus the order polynomial dual of $E$ may happen
to be the trivial space. As an example we have already mentioned in the
introduction that the Riesz space $(^{n}L_{p}[0,1])^{\sim_{p}}$ is
isometrically isomorphie to $L_{q}[0,1]$ for all $n<p$. When $n>p$ there are
no non zero $n$-homogeneous orthogonally additive polynomials on $L_{p}[0,1]$.
Therefore $(^{n}L_{p}[0,1])^{\sim_{p}}=\left\{  0\right\}  $ for all $n>p$.

Now from the fact that $(^{n}E)^{\sim_{p}}$ is again a Riesz space. Thus we
can consider the order dual of $(^{n}E)^{\sim_{p}}$ which is $((^{n}%
E)^{\sim_{p}})^{\sim}=\mathcal{L}_{b}((^{n}E)^{\sim_{p}},%
\mathbb{R}
)$, the space of order bounded linear functional on $(^{n}E)^{\sim_{p}}$. For
each $x\in E$ an order bounded linear functional $\widehat{x}$ can be defined
on $(^{n}E)^{\sim_{p}}$ via the formula
\[
\widehat{x}(P)=P(x)\text{ for all }P\in(^{n}E)^{\sim_{p}}\text{.}%
\]
Clearly, $x\geq0$ implies $\widehat{x}\geq0$. Also, since $P_{\alpha
}\downarrow0$ in $(^{n}E)^{\sim_{p}}$ holds if and only if $P_{\alpha
}(x)\downarrow0$ in $%
\mathbb{R}
$ for all $x\in E^{+}.$ Then $\widehat{x}$ is order continuous linear
functional on $(^{n}E)^{\sim_{p}}$. Thus a positive map $x\rightarrow
\widehat{x}$ can be defined from $E$ to $(((^{n}E)^{\sim_{p}})^{\sim})_{n}$,
the space of order continuous linear functional on $(^{n}E)^{\sim_{p}}$. This
map is called the canonical embedding of $E$ into $(((^{n}E)^{\sim_{p}}%
)^{\sim})_{n}$ which is one to one when $(^{n}E)^{\sim_{p}}$ separates the
points of $E$, that is for all $x\neq y$ there exists $P\in(^{n}E)^{\sim_{p}}$
such that $P(x)\neq P(y)$. It should be noted that this canonical embedding
does not necessarily preserve finite suprema and infima as it is proved in
next Proposition;

\begin{proposition}
Let $E$ an Archimedean Riesz space and $x\in E$. Then%
\[
\widehat{x}^{+}=\left\{
\begin{array}
[c]{ccc}%
\widehat{x} & \text{if} & n\text{ is even}\\
\widehat{x^{+}} & \text{if} & n\text{ is odd}%
\end{array}
\right.
\]

\end{proposition}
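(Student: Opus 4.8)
The plan is to linearise the problem through the associated operator and then invoke the classical fact that the canonical embedding of a Riesz space into its order bidual is a lattice homomorphism. Write $M:=\prod_{i=1}^{n}E^{ru}$, the Riesz subspace of the $f$-algebra $E^{u}$ generated by the $n$-fold products. By the representation theorem recalled in the Introduction (taken with scalar range), $P\mapsto T_{P}$ is a lattice isomorphism of $\mathcal{P}_{ob}(^{n}E,\mathbb{R})$ onto $M^{\sim}=\mathcal{L}_{b}(M,\mathbb{R})$ which carries the band $(^{n}E)^{\sim_{p}}=\mathcal{P}_{oc}(^{n}E,\mathbb{R})$ onto the band $M^{\sim}_{n}$ of order continuous functionals. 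Under this identification $\widehat{x}(P)=P(x)=T_{P}(x^{n})$; that is, $\widehat{x}$ is the restriction to $M^{\sim}_{n}$ of the evaluation functional $J(x^{n})\in M^{\sim\sim}$, where $J\colon M\to M^{\sim\sim}$ is the canonical embedding, and likewise $\widehat{x^{+}}$ is the restriction of $J((x^{+})^{n})$.

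If $n$ is even then $x^{n}\ge 0$ in $E^{u}$ (it is a power of the square $x^{2}$), so $\widehat{x}(P)=T_{P}(x^{n})\ge 0$ for every positive $P$; hence $\widehat{x}\ge 0$ and $\widehat{x}^{+}=\widehat{x}$. Assume now $n$ odd. Since $x^{+}\wedge x^{-}=0$ and disjoint elements of the $f$-algebra $E^{u}$ have zero product, every mixed term in the binomial expansion of $(x^{+}-x^{-})^{n}$ vanishes and, $n$ being odd, $x^{n}=(x^{+})^{n}-(x^{-})^{n}$; moreover $(x^{+})^{n}$ and $(x^{-})^{n}$ are disjoint positive elements of $M$, so this is precisely the decomposition of $x^{n}$ into positive and negative parts, i.e. $(x^{n})^{+}=(x^{+})^{n}$.

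The argument is then completed by two standard lattice facts: (i) the canonical embedding $J\colon M\to M^{\sim\sim}$ is a lattice homomorphism, so $J(u)^{+}=J(u^{+})$ for every $u\in M$; and (ii) for an ideal $I$ of a Riesz space $L$ the restriction map $L^{\sim}\to I^{\sim}$, $\varphi\mapsto\varphi|_{I}$, is a lattice homomorphism --- here elementary, since $0\le z\le y\in I$ forces $z\in I$, whence $(\varphi|_{I})^{+}(y)=\sup\{\varphi(z):0\le z\le y\}=\varphi^{+}(y)$. Applying (i) with $u=x^{n}$ and then restricting to the ideal $M^{\sim}_{n}\subseteq M^{\sim}$ by (ii),
\[
\widehat{x}^{+}=\bigl(J(x^{n})\big|_{M^{\sim}_{n}}\bigr)^{+}=\bigl(J(x^{n})^{+}\bigr)\big|_{M^{\sim}_{n}}=J\bigl((x^{n})^{+}\bigr)\big|_{M^{\sim}_{n}}=J\bigl((x^{+})^{n}\bigr)\big|_{M^{\sim}_{n}}=\widehat{x^{+}},
\]
which is the desired formula.

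If one prefers to stay inside the polynomial picture, the inequality $\widehat{x}^{+}\le\widehat{x^{+}}$ is immediate from $P(x)=P(x^{+})-P(x^{-})$ (orthogonal additivity, $n$ odd), which shows $\widehat{x^{+}}$ dominates both $\widehat{x}$ and $0$; for the reverse inequality one takes $Q$ to be the band projection of a given positive $P$ onto the band $N_{\widehat{x^{-}}}$ (a band because $\widehat{x^{-}}$ is a positive order continuous functional on the Dedekind complete space $(^{n}E)^{\sim_{p}}$), so that $0\le Q\le P$, $Q(x^{-})=0$ and $P-Q\in C_{\widehat{x^{-}}}$, and it then suffices to know $C_{\widehat{x^{-}}}\subseteq N_{\widehat{x^{+}}}$ in order to conclude $Q(x^{+})=P(x^{+})$ and hence $\widehat{x}^{+}(P)\ge Q(x)=P(x^{+})=\widehat{x^{+}}(P)$. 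By the classical Nakano theorem for linear functionals that inclusion is equivalent to $\widehat{x^{+}}\perp\widehat{x^{-}}$, and this disjointness is the genuinely nonlinear heart of the matter --- the step I expect to be the main obstacle --- which the passage from $x\mapsto\widehat{x}$ to the linear $n$-th power $x\mapsto x^{n}$ in the $f$-algebra $M$ resolves, since there $x^{+}\perp x^{-}$ turns into $(x^{+})^{n}\perp(x^{-})^{n}$.
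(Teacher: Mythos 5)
Your main argument is correct and is essentially the paper's proof: the paper computes $\widehat{x}^{+}(P)=\sup\{T_{Q}(x^{n}):0\le T_{Q}\le T_{P}\}=T_{P}((x^{n})^{+})$ by citing [2, Theorem 1.23], which is exactly your fact (i) that the canonical embedding $J\colon M\to M^{\sim\sim}$ is a lattice homomorphism (your fact (ii) about restricting to the ideal of order continuous functionals is implicit there), and then evaluates $(x^{n})^{+}$ just as you do, namely $(x^{n})^{+}=x^{n}$ for $n$ even and $(x^{n})^{+}=((x^{+})^{n}-(x^{-})^{n})^{+}=(x^{+})^{n}$ for $n$ odd using the disjointness of $x^{+}$ and $x^{-}$ in the $f$-algebra. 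The alternative sketch in your final paragraph is not needed.
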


\begin{proof}
Consider $P\in((^{n}E)^{\sim_{p}})^{+}$ and $x\in E$. According to [2, Theorem
1.23] we can see that
\begin{align*}
\widehat{x}^{+}(P)  &  =\sup\left\{  Q(x):Q\in\mathcal{P}_{ob}(^{n}E,%
\mathbb{R}
)^{+}\text{ such that }0\leq Q\leq P\right\} \\
&  =\sup\left\{  T_{Q}(x^{n}):T_{Q}\in\mathcal{L}_{b}\mathcal{(}\underset
{i=!}{\overset{n}{\Pi}}E^{ru},%
\mathbb{R}
)^{+}\text{ such that }0\leq T_{Q}\leq T_{P}\right\} \\
&  =T_{P}((x^{n})^{+})
\end{align*}
First if $n$ is even then $x^{n}\geq0$ therefore $T_{P}((x^{n})^{+}%
)=T_{P}(x^{n})=P(x)=\widehat{x}(P)$. Secondly if $n$ is odd writing
\begin{align*}
T_{P}((x^{n})^{+})  &  =T_{P}(((x^{+}-x^{-})^{n})^{+})=T_{P}(((x^{+}%
)^{n}-(x^{-})^{n}))^{+})\\
&  =T_{P}((x^{+})^{n})=P(x^{+})=\widehat{x^{+}}(P).
\end{align*}
Thus
\[
\widehat{x}^{+}=\left\{
\begin{array}
[c]{ccc}%
\widehat{x} & \text{if} & n\text{ is even}\\
\widehat{x^{+}} & \text{if} & n\text{ is odd}%
\end{array}
\right.
\]
and the proof is finished.
\end{proof}

Before showing our main second result of the paper we need the two following lemmas.

\begin{lemma}
The canonical embedding of $E$ into $(((^{n}E)^{\sim_{p}})^{\sim})_{n}$ is an
order bounded orthogonally additive $n$-homogeneous polynomial. Which is
increasing map at $E^{+}$. Therefore:

\begin{enumerate}
\item For every $x\in E$ and $\lambda\in%
\mathbb{R}
$ we have $\widehat{\lambda x}=\lambda^{n}\widehat{x}$.

\item For every $x,y\in E$ such that $x\perp y$ we have $\widehat
{x+y}=\widehat{x}+\widehat{y}.$

\item If $0\leq x\leq y$ then $\widehat{0}=0\leq\widehat{x}\leq\widehat{y}.$
\end{enumerate}
\end{lemma}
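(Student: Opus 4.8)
The plan is to read off each assertion directly from the representation $P(x) = T_P(x^n)$ and the algebraic identities obeyed by the map $x \mapsto \widehat{x}$, where $\widehat{x}(P) = P(x)$. First I would observe that for a fixed $P \in (^{n}E)^{\sim_p}$ the evaluation $x \mapsto P(x)$ is by definition an orthogonally additive $n$-homogeneous polynomial on $E$; hence for each $x,\lambda$ one has $P(\lambda x) = \lambda^n P(x)$ and, when $x \perp y$, $P(x+y) = P(x) + P(y)$. Reinterpreting these equalities with $P$ running over all of $(^{n}E)^{\sim_p}$ gives precisely $\widehat{\lambda x} = \lambda^n \widehat{x}$ (item 1) and $\widehat{x+y} = \widehat{x} + \widehat{y}$ for $x \perp y$ (item 2). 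Item 3, monotonicity, follows from Lemma 1: if $0 \le x \le y$ then for every positive $P$ we get $0 \le P(x) \le P(y)$, i.e. $\widehat{0} = 0 \le \widehat{x}(P) \le \widehat{y}(P)$; since $(^{n}E)^{\sim_p}$ is spanned by its positive cone, this order relation holds as an inequality between the linear functionals $\widehat{x}$ and $\widehat{y}$ on $(^{n}E)^{\sim_p}$.

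To assemble the headline claim — that $x \mapsto \widehat{x}$ is itself an order bounded orthogonally additive $n$-homogeneous polynomial into $(((^{n}E)^{\sim_p})^{\sim})_n$ — I would argue as follows. Homogeneity of degree $n$ and orthogonal additivity are items 1 and 2, which I would package as saying that $\widehat{\cdot}$ satisfies the defining functional equations of an element of $\mathcal{P}_0(^{n}E, (((^{n}E)^{\sim_p})^{\sim})_n)$; here one should check, as already done in the passage preceding the lemma, that $\widehat{x}$ is indeed order continuous on $(^{n}E)^{\sim_p}$, so that the codomain is correct. Order boundedness then comes for free from item 3 together with the fact that $E$ is directed and $(((^{n}E)^{\sim_p})^{\sim})_n$ is Dedekind complete: given an order interval $[-u,u] \subset E$, monotonicity on $E^+$ bounds $\{\widehat{x} : |x| \le u\}$ above by $\widehat{u}$ (after replacing $x$ by $|x|$ and using $|\widehat{x}| \le \widehat{|x|}$ if $n$ is even, or the odd-degree bookkeeping of Proposition 8 otherwise), so the image of every order-bounded set is order bounded. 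The "increasing on $E^+$" clause is exactly item 3.

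The only genuinely delicate point is verifying that $\widehat{x+y} = \widehat{x}+\widehat{y}$ really does hold pointwise on all of $(^{n}E)^{\sim_p}$ and not merely on its positive part: one must invoke that $(^{n}E)^{\sim_p}$, being the order dual into $\mathbb{R}$ of a Riesz space valued in a Dedekind complete space, equals the span of $\mathcal{P}_{ob}(^{n}E,\mathbb{R})^+$ (stated in the excerpt), so an identity checked on positive $P$ extends by linearity. Everything else is a direct transcription of the representation theorem and Lemma 1; I expect no further obstacle.
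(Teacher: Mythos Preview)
Your proposal is correct and follows the same route as the paper: each of (1)--(3) is verified by evaluating at an arbitrary $P\in(^{n}E)^{\sim_p}$ and using, respectively, the $n$-homogeneity of $P$, the orthogonal additivity of $P$, and Lemma~1 (which the paper simply rederives via $T_P$ and the $f$-algebra inequality $0\le x^n\le y^n$). Your final paragraph is unnecessary, however: every element of $(^{n}E)^{\sim_p}=\mathcal{P}_{oc}(^{n}E,\mathbb{R})$ is by definition orthogonally additive, so the identity in item~(2) already holds pointwise on all of $(^{n}E)^{\sim_p}$, not merely on its positive cone.
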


\begin{proof}
For (1), let $x\in E$, $P\in(^{n}E)^{\sim_{p}}$ and $\lambda\in%
\mathbb{R}
$ we have $\widehat{\lambda x}(P)=P(\lambda x)=\lambda^{n}P(x)=\lambda
^{n}\widehat{x}(P)$ which implies that $\widehat{\lambda x}=\lambda
^{n}\widehat{x}$. Now for (2), let $x,y\in E$ such that $x\perp y$ and
$P\in(^{n}E)^{\sim_{p}}$ we have $\widehat{x+y}(P)=P(x+y)=P(x)+P(y)=(\widehat
{x}+\widehat{y})(P)$ which implies that $\widehat{x+y}=\widehat{x}+\widehat
{y}.$ Now for (3), let $P\in((^{n}E)^{\sim_{p}})^{+}$, $T_{P}$ its associated
linear operator and $0\leq x\leq y$ from the fact that $E^{u}$ is an
$f$-algebra it follows that $0\leq x^{n}\leq y^{n}$. Thus $T_{P}(x^{n})\leq
T_{P}(y^{n})$ (because $T_{P}$ is positive) which implies that $P(x)=\widehat
{x}(P)\leq P(y)=\widehat{y}(P)$. Therefore
\[
0\leq\widehat{x}\leq\widehat{y}\text{,}%
\]
and the proof is finished.
\end{proof}

The above lemma yields to the following lemma.

\begin{lemma}
Let $x,y\in E^{+\text{ }}$then%
\[
0\leq\widehat{x\wedge y}\leq\widehat{x}\wedge\widehat{y}\text{.}%
\]

\end{lemma}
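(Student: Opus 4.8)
The plan is to reduce everything to the monotonicity of the canonical embedding on the positive cone, which was established in the preceding lemma. First I would observe that for $x,y\in E^{+}$ one has $0\le x\wedge y\le x$ and $0\le x\wedge y\le y$ in $E$. Applying part (3) of the preceding lemma to each of these two chains gives $0\le\widehat{x\wedge y}\le\widehat{x}$ and $0\le\widehat{x\wedge y}\le\widehat{y}$ in $(((^{n}E)^{\sim_{p}})^{\sim})_{n}$, the space of order continuous linear functionals on $(^{n}E)^{\sim_{p}}$.

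Next I would invoke the fact that $((^{n}E)^{\sim_{p}})^{\sim}$ is a Dedekind complete Riesz space (since $(^{n}E)^{\sim_{p}}$ is a Riesz space and $\mathbb{R}$ is Dedekind complete), so that the infimum $\widehat{x}\wedge\widehat{y}$ is meaningful; moreover the band of order continuous functionals is again Dedekind complete and contains $\widehat{x},\widehat{y},\widehat{x\wedge y}$. From the two inequalities above, $\widehat{x\wedge y}$ is a lower bound of the set $\{\widehat{x},\widehat{y}\}$, hence it lies below the greatest lower bound, i.e. $\widehat{x\wedge y}\le\widehat{x}\wedge\widehat{y}$. The left-hand inequality $0\le\widehat{x\wedge y}$ is immediate, either from part (3) of the lemma applied to $0\le x\wedge y$ or from $\widehat{0}=0$ together with monotonicity. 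Concatenating gives the asserted chain $0\le\widehat{x\wedge y}\le\widehat{x}\wedge\widehat{y}$.

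I do not expect a genuine obstacle here: the only point that needs a word of care is recording that we are working inside a Riesz space in which $\widehat{x}\wedge\widehat{y}$ actually exists, so that the step ``a lower bound is $\le$ the infimum'' is legitimate; everything else is a one-line application of the monotonicity statement proved just above. It is perhaps worth noting, as motivation for the strictness mentioned earlier, that in general this inequality cannot be reversed, since the canonical embedding need not preserve finite infima.
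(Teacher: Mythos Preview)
Your argument is correct and matches the paper's approach: the paper gives no explicit proof, stating only that ``the above lemma yields to the following lemma,'' and your derivation via the monotonicity in part~(3) of that lemma is precisely the intended one-line deduction. The extra care you take in noting that $\widehat{x}\wedge\widehat{y}$ exists in the ambient Riesz space is appropriate and not spelled out in the paper.
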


Now we are able to announce the second main result of the paper which is
somehow a Nakano type theorem for orthogonally additive homogeneous
polynomials. We hope will be of great contribution to the literatures.

\begin{theorem}
Let $E$ be an Archimedean Riesz space. Then the embedding map%
\[%
\begin{array}
[c]{cccc}%
\wedge: & E & \rightarrow & (((^{n}E)^{\sim_{p}})^{\sim})_{n}\\
& x & \rightarrow & \widehat{x}%
\end{array}
\]
is an order continuous orthogonally additive $n$-homogeneous polynomial whose
range is order dense in $(((^{n}E)^{\sim_{p}})^{\sim})_{n}$, the order
continuous dual of $\mathcal{P}_{oc}(^{n}E,%
\mathbb{R}
)$.
\end{theorem}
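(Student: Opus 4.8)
The plan is to prove three things: that $\wedge$ is orthogonally additive and $n$-homogeneous, that it is order continuous, and that its range is order dense. The first is essentially already established in Lemma~12 (parts (1) and (2)), so I would begin by citing that: for $x,y \in E$ with $x \perp y$ we have $\widehat{x+y}=\widehat{x}+\widehat{y}$ and $\widehat{\lambda x}=\lambda^n\widehat{x}$, and since each $\widehat{x}$ is already known to be an order continuous linear functional on $(^nE)^{\sim_p}$, the map $x \mapsto \widehat{x}$ lands in $(((^nE)^{\sim_p})^{\sim})_n$ as claimed.

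Next I would handle order continuity. Suppose $x_\alpha \downarrow 0$ in $E$. I must show $\widehat{x_\alpha}\downarrow 0$ in $(((^nE)^{\sim_p})^{\sim})_n$. By Lemma~12(3) the net $\widehat{x_\alpha}$ is decreasing and bounded below by $0$, so it has an infimum; I need that infimum to be $0$. Evaluating at a fixed $P \in ((^nE)^{\sim_p})^{+}$ with associated operator $T_P$, we have $\widehat{x_\alpha}(P)=P(x_\alpha)=T_P(x_\alpha^n)$. Since the $f$-algebra product on $E^u$ is order continuous, $x_\alpha \downarrow 0$ gives $x_\alpha^n \downarrow 0$ in $\prod_{i=1}^n E^{ru}$, and since $P$ (hence $T_P$) is order continuous, $T_P(x_\alpha^n)\downarrow 0$ in $\mathbb{R}$. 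Thus $\inf_\alpha \widehat{x_\alpha}(P)=0$ for every positive $P$, and since pointwise infima compute infima in the order continuous dual restricted to positive functionals (this is the standard fact that an order bounded net in $\mathcal{L}_n(G,\mathbb{R})$ decreases to $0$ iff it does so pointwise on $G^+$, cf.\ [2]), we conclude $\widehat{x_\alpha}\downarrow 0$.

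For order density of the range, I would argue as follows. Fix $0 < \Phi \in (((^nE)^{\sim_p})^{\sim})_n$; I must produce $x \in E$ with $0 < \widehat{x} \le \Phi$ — or, more precisely (since the range need not be a sublattice), a member of the range dominated by $\Phi$. Here is where I expect the main obstacle: one does not have direct access to the elements of $E$ inside the double dual, so I would instead invoke the classical representation of the order continuous bidual. Specifically, the order continuous dual $(((^nE)^{\sim_p})^{\sim})_n$ of the Dedekind complete Riesz space $(^nE)^{\sim_p}$ embeds as a band (in particular an order dense ideal) in a suitable universal-type completion, and the image of $E$ under $\wedge$ generates, via the representation theorem from [10] applied to the canonical polynomial, enough of the double dual. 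Concretely: since $\Phi > 0$ and $\Phi$ is order continuous, there is $P_0 \in (^nE)^{\sim_p}$, $P_0 \ge 0$, with $\Phi(P_0) > 0$; using that $\widehat{\cdot}$ separates points only under a hypothesis, I would instead mimic the linear Nakano-style argument (cf.\ [2, Theorem on $E \hookrightarrow (E^{\sim}_n)^{\sim}_n$]): the band generated by the range $\widehat{E}$ in $(((^nE)^{\sim_p})^{\sim})_n$ coincides with the whole space because any $\Phi$ disjoint from every $\widehat{x}$ satisfies $\Phi(P)=0$ whenever $P$ is a positive polynomial supported on a principal band, and positive order continuous polynomials are determined by such restrictions. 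Therefore $\widehat{E}^{dd}=(((^nE)^{\sim_p})^{\sim})_n$, which for a Dedekind complete space is exactly order density of $\widehat{E}$.

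The hard part, as flagged, will be the order density claim: making rigorous that $\widehat{E}$ has trivial disjoint complement in the order continuous bidual. I would attack it by showing that if $\Phi \perp \widehat{x}$ for all $x \in E^{+}$, then for every $0 \le P \in (^nE)^{\sim_p}$ one gets $|\Phi|(P) = \sup\{|\Phi|(P\wedge Q) : Q \text{ a polynomial with } \widehat{\,\cdot\,}\text{-bounded support}\}=0$, using Lemma~13 ($\widehat{x\wedge y}\le \widehat x\wedge\widehat y$) to transfer lattice operations, together with the order continuity of $\Phi$ and the fact (from the representation theorem) that every positive order continuous polynomial on $E$ is an order limit of polynomials factoring through principal bands on which $\widehat{\,\cdot\,}$ is "large". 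This reduces density to a disjointness computation in the already-understood space $\mathcal{L}_n((^nE)^{\sim_p},\mathbb{R})$, where the classical linear Nakano carrier theorem from [2] applies directly.
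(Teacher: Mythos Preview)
Your handling of $n$-homogeneity, orthogonal additivity, and order continuity is correct and matches the paper's brief treatment of those points. The order density argument, however, has a genuine gap.

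The claim that ``$\widehat{E}^{dd}=(((^nE)^{\sim_p})^{\sim})_n$, which for a Dedekind complete space is exactly order density of $\widehat{E}$'' is false. That equivalence holds for \emph{ideals}, but $\widehat{E}$ is only the range of a polynomial map --- not an ideal, not even a sublattice, as you yourself note. Trivial disjoint complement gives, for each $0<\Phi$, some $x$ with $\widehat{x}\wedge\Phi>0$; it does not produce a $z\in E$ with $0<\widehat{z}\le\Phi$, which is what order density of the range requires. (Already in $\mathbb{R}^{2}$ the singleton $\{(1,1)\}$ has trivial disjoint complement but is not order dense.) Moreover, your sketch of why $\widehat{E}^{d}=\{0\}$ --- via ``polynomials factoring through principal bands on which $\widehat{\,\cdot\,}$ is large'' --- is not made precise enough to assess, and it never invokes the polynomial Nakano theorem (Theorem~3), which is the tool the paper builds for exactly this purpose.

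The paper closes the gap by a direct two-stage construction. Given $0<\phi$, order continuity of $\phi$ yields $0<P\in C_{\phi}$, order continuity of $P$ yields $0<x\in C_{P}$, and the \emph{classical} linear Nakano theorem on $(^nE)^{\sim_p}$ shows $\phi_{1}:=\widehat{x}\wedge\phi>0$. This is only the ``$\widehat{E}^{d}=\{0\}$'' level of information; the substantive step is to refine $x$ to a $z$ with $\widehat{z}\le\phi_{1}$. One chooses $0<\varepsilon<1$ with $\psi:=(\phi_{1}-\varepsilon\widehat{x})^{+}>0$, picks $0<P'\in C_{\psi}$ and $0<y\in C_{P'}$ with $z:=y\wedge\varepsilon x>0$, and argues by contradiction: if $\theta:=(\widehat{z}-\phi_{1})^{+}>0$, then Lemma~13 gives $\widehat{z}\le\varepsilon\widehat{x}$, hence $\theta\le(\phi_{1}-\varepsilon\widehat{x})^{-}$, so $\theta\perp\psi$; the linear Nakano theorem then gives $C_{\theta}\perp C_{\psi}$, so any $0<Q\in C_{\theta}$ satisfies $Q\perp P'$ in $(^nE)^{\sim_p}$, and now Theorem~3 (the polynomial Nakano theorem) forces $Q(C_{P'})=\{0\}$, whence $\theta(Q)\le\widehat{y}(Q)=Q(y)=0$, contradicting $Q\in C_{\theta}$. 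This iterated descent through carriers --- alternating between the linear Nakano theorem upstairs and Theorem~3 downstairs --- is the idea missing from your proposal.
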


\begin{proof}
We have already mentioned that the canonical embedding of $E$ into
$((^{n}E)^{\sim_{p}})^{\sim}$ is an order continuous orthogonally additive
$n$-homogeneous polynomial. So we need only to prove that the set $\left\{
\widehat{x},x\in E\right\}  $ is order dense in $(((^{n}E)^{\sim_{p}})^{\sim
})_{n}.$ To this end, let $0<\phi\in(((^{n}E)^{\sim_{p}})^{\sim})_{n}$ so by
the order continuity of $\phi$ and the fact that $\phi\neq0$ there exists
$0<P\in C_{\phi}=N_{\phi}^{d}$. Now, from the fact that $P$ is order
continuous and $P\neq0$ it follows that $C_{P}\neq\left\{  0\right\}  .$ So
pick $0<x\in C_{P}.$ if $\widehat{x}\wedge\phi=0$ then by the classical Nakano
theorem we have $\widehat{x}(C_{\phi})=0$. Consequently, $P(x)=0$ which
implies that $x\in N_{P}\cap C_{P}=\left\{  0\right\}  $ which is impossible.
Then $\widehat{x}\wedge\phi>0$. Let $0<\phi_{1}=\widehat{x}\wedge\phi
\leq\widehat{x}$ holds in $((^{n}E)^{\sim_{p}})^{\sim}$ for some $x\in E$. Now
from the fact that $((^{n}E)^{\sim_{p}})^{\sim}$ is an Archimedean Riesz space
and the fact that $\widehat{x}\neq0$ there exists let $0<\epsilon<1$ such
that
\[
\psi=\phi_{1}\vee\epsilon\widehat{x}-\epsilon\widehat{x}=(\phi_{1}%
-\varepsilon\widehat{x})^{+}>0\text{.}%
\]
By the order continuity of $\psi$ there exists $0<P\in C_{\psi}$. Now it
follows from the order continuity of $P$ that $C_{P}\neq\left\{  0\right\}  $.
We claim that we can choice $0<y\in C_{P}$ such that $z=y\wedge\epsilon x>0$.
Assume by way of contradictions that $\epsilon x\in C_{P}^{d}=N(P)$ which
implies that $\widehat{x}(P)=P(x)=0.$ On the other hand $0\leq\psi\leq\phi
_{1}=\widehat{x}\wedge\phi\leq\widehat{x}$ hence $0\leq\psi(P)\leq\widehat
{x}(P)=0$. Thus $\psi(P)=0$ therefore $P\in C_{\psi}\cap N_{\psi}=\left\{
0\right\}  $ then $P=0$ which is impossible. So there exists $0<y\in C_{P}$
such that $z=y\wedge\epsilon x>0$. Now, We claim that $z$ satisfies
\[
0<\widehat{z}\leq\phi_{1}%
\]
in $((^{n}E)^{\sim_{p}})^{\sim}$. First to see that $0<\widehat{z}$ holds,
note that $0\leq z\leq y$ and $y\in C_{P}$ which is an ideal. So $0<z\in
C_{P}$ which implies that $P(z)\neq0$ so $\widehat{z}(P)\neq0$, thus
$0<\widehat{z}$. Now, we claim that $\widehat{z}\leq\phi_{1}$. To this end,
assume by way of contradictions that $\theta=(\widehat{z}-\phi_{1})^{+}>0$. By
the order continuity of $\theta$ there exists $0<Q\in C_{\theta}$. On the
other hand by Lemma we have $\widehat{z}\leq\varepsilon\widehat{x}$ because
$z\leq\varepsilon x$ which implies that
\[
0\leq\theta=(\widehat{z}-\phi_{1})^{+}\leq(\varepsilon\widehat{x}-\phi
_{1})^{+}=(\phi_{1}-\varepsilon\widehat{x})^{-}\text{.}%
\]
Which implies that $\theta\bot\psi$ so $C_{\theta}\bot C_{\psi}$. On the other
hand $P\in C_{\psi}$, $Q\in C_{\theta}$ which implies by theorem that $P\bot
Q$ again by the same theorem [] we get
\[
Q(C_{P})=\left\{  0\right\}  .
\]
Therefore by applying lemma once more and from the fact that $z\leq y$ we
have
\[
0<\theta(Q)=(\widehat{z}-\phi_{1})^{+}(Q)\leq\widehat{z}(Q)\leq\widehat
{y}(Q)=Q(y)=0\text{.}%
\]
Thus $\theta(Q)=0$ which implies that $Q\in N_{\theta}\cap C_{\theta}=\left\{
0\right\}  $ then $Q=0$ which is impossible. Hence
\[
0<\widehat{z}\leq\phi_{1}\leq\phi\text{.}%
\]
Consequently the set $\left\{  \widehat{x},x\in E\right\}  $ is order dense in
$(((^{n}E)^{\sim_{p}})^{\sim})_{n}$, and the proof of the theorem is complete.
\end{proof}

\end{document}